\documentclass[11pt]{article}
\setlength{\oddsidemargin}{0in}
\setlength{\evensidemargin}{0in}
\setlength{\topmargin}{0in}
\setlength{\textwidth}{16.0cm}
\setlength{\textheight}{21.0cm}

\usepackage{amssymb}
\usepackage{bm}
\usepackage{cite}
\usepackage{mathtools}
\usepackage{graphicx}
\usepackage{caption}
\usepackage{subcaption}
\usepackage{amsmath}
\usepackage{amsthm}
\usepackage[round]{natbib}
\usepackage{hyperref}
\usepackage{enumitem}

\DeclareMathOperator{\Cov}{\bf{Cov}}

\DeclareMathOperator*{\argmax}{arg\,max}

\newtheorem{theorem}{Theorem}[section]
\newtheorem{definition}[theorem]{Definition}
\newtheorem{lemma}[theorem]{Lemma}
\newtheorem{remark}[theorem]{Remark}

\newcommand{\sJM}{\mathop{\sum\nolimits\sp{\ne}}}

\begin{document}
\thispagestyle{empty}
\begin{center}
{\Large\bf Infill asymptotics for logistic regression estimators for
spatio-temporal point processes}\\[.4in]

\noindent
{\large M.N.M.~van~Lieshout$^{1,2}$ and C. Lu$^{2}$}\\[.1in]
\noindent
{\em $^{1}$Centrum Wiskunde \& Informatica \\
P.O.\ Box 94079, NL-1090 GB, Amsterdam, The Netherlands\\[.1in]
 $^{2}$Department of Applied Mathematics, University of Twente \\
 P.O.\ Box 217, NL-7500 AE, Enschede, The Netherlands
}\\[.1in]
\end{center}
\begin{verse}
{\footnotesize
\noindent
{\bf Abstract}\\
\noindent
This paper discusses infill asymptotics for
logistic regression estimators for
spatio-temporal point processes whose intensity
functions are of log-linear form. We establish
strong consistency and asymptotic normality for
the parameters of a Poisson point process model
and demonstrate how these results can be 
extended to general point process models. 
Additionally, under proper conditions, we also 
extend our central limit theorem to 
other unbiased estimating equations that are 
based on the Campbell--Mecke theorem.\\[0.1in]

\noindent
{\em Keywords \& Phrases:} Campbell--Mecke theorem, infill asymptotics, logistic regression
estimator, spatio-temporal point process, unbiased estimating equation.\\[0.1in]
\noindent
{\em 2010 Mathematics Subject Classification:}
60G55, 62M30. 
}
\end{verse}

\section{Introduction}
\label{sec:introduction}

Spatial and spatio-temporal point processes have
been widely used to model, for example, earthquakes
\citep{Bray2013earthquakes}, fires
(\citealp{Moller2010wildFires};\ \citealp{Lu2021fireIncidents}) and tropical 
rain forests
\citep{Waagepetersen2008estimatingFunction}. 

Usually, the modelling procedure of a spatial or
spatio-temporal point pattern starts with
the intensity function which characterizes 
the probability of a point occurring in an
infinitesimal ball centred at a given location
and time. In many applications, the intensity
function is defined as a log-linear parametric function of certain covariates (e.g.,\ \citealp{Jesper2007parametric};\ \citealp{Waagepetersen2008estimatingFunction};\ \citealp{Guan2010weightedEstimatingFunctions};\ \citealp{Coeurjolly2014parametric}). 
To estimate the parameters, classical options include maximum likelihood estimation (e.g.,\ \citealp{Ogata1981MLE};\ \citealp{Moller2004MLE}), maximum pseudo-likelihood estimation (e.g.,\ \citealp{Besag77ISI};\ \citealp{Baddeley2000PLE}), logistic regression estimation (e.g.,\ \citealp{Baddeley2010LRPP};\ \citealp{Baddeley2014LRPP}) and minimum 
contrast estimation (e.g., \citealp{Guyon1995consistency}).

Among them, maximum likelihood estimation is the most 
computationally intensive method,
since for many point process models the 
likelihood function involves an intractable
normalizing constant that must be approximated
by Markov chain Monte Carlo simulations
(e.g., \citealp{Moller2004MLE}). Instead, 
maximum pseudo-likelihood estimation and logistic
regression estimation are based on the
well-known Campbell--Mecke and Nguyen--Zessin theorems (see, e.g.,\
\citealp{Daley2009CampbellMecke}) and are
easy to implement using standard software for
generalized linear models (e.g.,\ \citealp{Baddeley2015spatstat}). 

From a theoretical perspective, it is important
to analyze the asymptotic properties of the parameter
estimators mentioned above. For spatial and spatio-temporal point
processes, two asymptotic regimes can be
formulated: increasing-domain and infill
asymptotics \citep{Ripley1988asymptoticRegimes}.
In the former regime, the observation window
grows; whereas in the latter, the window remains
fixed but contains more and more points. 

In the literature, \citet{Waagepetersen2008estimatingFunction}
studied infill asymptotics for maximum 
pseudo-likelihood estimators for Poisson and
cluster point process models with log-linear parametric
intensity functions using various weighting
schemes to approximate the integral involved.
Again, assuming log-linear parametric intensity functions,
\citet{Thurman2015increasingDomainCI} and
\citet{Choiruddin2018increasingDomainCI}
developed increasing-domain asymptotics for
regularized versions of pseudo-likelihood 
and logistic regression estimators. In addition,
\citet{Baddeley2014LRPP} proved increasing-domain
asymptotics for logistic regression estimators
for stationary Gibbs point processes. In this paper, we study consistency and asymptotic normality for logistic regression
estimators for spatio-temporal point processes 
in the infill asymptotic regime and extend the central limit theorem to other unbiased estimating equations that are
based on the Campbell--Mecke theorem. 

The remainder of the paper is organized as
follows. Section~\ref{sec:background} introduces
some background and notation regarding
spatio-temporal point processes: parametric
intensity function, logistic regression
estimation, computations of the first two moments
of the first-order $U$-statistics and
infill asymptotic regime. Section~\ref{sec:asymptotics}
provides proofs for strong consistency and
asymptotic normality for logistic regression
estimators in the case of Poisson point process models. Section~\ref{sec:extensions}
discusses the extension of the asymptotic results to general point process models and general unbiased estimating 
equations. Finally, the paper finishes with some
conclusions.

\section{Background and notation}
\label{sec:background}

Let $X$ be a spatio-temporal point process on a bounded non-empty open set 
$W\times T \subset \mathbb{R}^{2}\times \mathbb{R}$ equipped with the Borel 
$\sigma$-algebra (see, e.g., \citealp{Daley2009CampbellMecke}). Suppose that 
the first-order moment measure of $X$, which is defined as 
\begin{equation*}
    \Lambda\left( B \right) = 
    \mathbb{E}\left\{ \sum_{x\in X} 1\left(x\in B \right) \right\}
\end{equation*}
for any Borel subset $B \subset W\times T$, exists as a $\sigma$-finite measure 
that is absolutely continuous with respect to Lebesgue measure with Radon--Nikodym 
derivative $\lambda$. Here, $1(\cdot)$ denotes the indicator function. Then 
$\lambda: W\times T \to [0,\infty)$ is called the intensity function of $X$. 

Higher order moment measures
are defined analogously. For 
instance, the second-order factorial moment
measure of $X$ is
\begin{equation*}
\Lambda^{(2)}\left( B_1 \times B_2 \right) = 
\mathbb{E}\left\{ \sJM_{x, y \in X} 
  1\left(x\in B_1, y\in B_2 \right) \right\}
\end{equation*}
for any Borel subsets $B_1, B_2 \subset W\times T$. Suppose this measure also exists as a $\sigma$-finite measure that is absolutely continuous with respect to Lebesgue measure with Radon–Nikodym derivative $\lambda^{(2)}$. Then $\lambda^{(2)}:(W\times T)^{2}\to [0,\infty)$ is called the second-order product density function of $X$. Upon scaling, one obtains the pair correlation function
$g(x,y) = \lambda^{(2)}\left(x,y\right) / (\lambda\left(x\right)\lambda\left(y\right))$ of $X$,
provided that 
$\lambda\left(x\right) \lambda\left(y\right) 
> 0$.
 
\subsection{Parametric intensity function}
\label{subsec:parametricintensity}

In this paper, we assume that $X$ admits an intensity function $\lambda$ that is of log-linear form offset by a measurable function $b$ and parameterized by a vector $\boldsymbol\theta$ in some parameter space $\boldsymbol\Theta \subset \mathbb{R}^m$. Specifically,
\begin{equation}
    \lambda\left( u;\boldsymbol\theta \right) = 
    b\left(u\right) \exp\left( \boldsymbol\theta^{\top} \bm{z}\left(u\right) \right),
    \label{e:intensity}
\end{equation}
where $u=(w,t) \in W \times T$ denotes a location and time combination, $b: W\times T \to [0,\infty)$ is a measurable function that serves as the baseline or reference intensity, $\bm{z}=[z^{(1)},\dots,z^{(m)}]^\top : W \times T \to \mathbb{R}^{m}$ is an $m$-dimensional measurable vector of spatio-temporal covariates and $\boldsymbol\theta = [\theta^{(1)},\dots,\theta^{(m)}]$ is the parameter vector. The gradient vector of 
$\lambda\left( u;\boldsymbol\theta \right)$ with respect to
$\boldsymbol\theta$ then takes the form 
\begin{equation*}
    \nabla\lambda\left(u;\boldsymbol\theta\right)
    = 
    \lambda\left(u;\boldsymbol\theta\right) \bm{z}\left(u\right).
\end{equation*}

Conditions must be imposed on $b$ and $\bm{z}$ to ensure that $\lambda$ is absolutely integrable. In the sequel, it will sometimes also be necessary to assume that $b$, and thus $\lambda$, is strictly positive. When this is the case, we will state it explicitly.

\subsection{Logistic regression estimation}
\label{subsec:LRestimation}

Estimating equations for the parameters of a spatio-temporal point process model in general and the logistic regression estimator in particular are based on the Campbell--Mecke theorem (see, e.g.,\ \citealp{Daley2009CampbellMecke}).

Consider a spatio-temporal point process $X$ on $W\times T$ with intensity function $\lambda$. For any real-valued measurable function $f$ defined on $W\times T$ such that $f\lambda$ is absolutely integrable, the Campbell--Mecke theorem reads
\begin{equation}
    \mathbb{E}\left\{ \sum_{x\in X} f\left(x\right)\right\} = 
    \int_{W \times T} f\left(u\right) \lambda\left(u\right) du,
    \label{e:campbell-mecke}
\end{equation}
where $x$ runs through the points of $X$. When $\lambda$ is parameterized by a vector $\boldsymbol\theta$ as $\lambda\left(u;\boldsymbol\theta\right)$, (\ref{e:campbell-mecke}) provides a basis for estimating $\boldsymbol\theta$. 

The logistic regression estimator is based on the components of the vector function
\begin{equation}
    \boldsymbol{f}\left(u;\boldsymbol\theta\right) = \nabla\log\left[
       \frac{  \lambda\left(u;\boldsymbol\theta\right) }
     { \lambda\left(u;\boldsymbol\theta\right) + \rho\left(u\right) }
       \right] = 
          \frac{ \rho\left(u\right) / \lambda\left(u;\boldsymbol\theta\right) }
          { \lambda\left(u;\boldsymbol\theta\right) + \rho\left(u\right) } 
          \nabla\lambda\left(u;\boldsymbol\theta\right).
    \label{e:LR_base}
\end{equation}
Here, one assumes that $\lambda(u;\boldsymbol\theta)$ is a positive-valued differentiable function such that its gradient vector $\nabla\lambda(u;\boldsymbol\theta)$ is absolutely integrable, and that $\rho(u)$ is a positive-valued measurable function also defined on $W\times T$. The idea is then to estimate both sides of (\ref{e:campbell-mecke}) and solve the equations for $\boldsymbol\theta$. In order to approximate the right-hand side, one may use a `dummy' point process $D$ on $W\times T$ which is independent of $X$ and has absolutely integrable intensity function $\rho$. Applying the Campbell--Mecke theorem to $D$, one finds that
\begin{equation*}
  \sum_{x\in D} \frac{ 1 }{ \lambda(x;\boldsymbol\theta) + \rho(x) }
  \nabla\lambda(x;\boldsymbol\theta)
\end{equation*}
is an unbiased estimator for the right-hand side of (\ref{e:campbell-mecke}) with $\boldsymbol{f}$ as in
(\ref{e:LR_base}). Hence,
 \begin{equation}
    {\boldsymbol s}(X,D;\boldsymbol\theta) = \sum_{x\in X}
       \frac{\rho(x)/\lambda(x;\boldsymbol\theta)}{\lambda(x;\boldsymbol\theta)+\rho(x)}
           \nabla\lambda(x;\boldsymbol\theta)
  - \sum_{x\in D} \frac{1}{\lambda(x;\boldsymbol\theta)+\rho(x)} 
           \nabla\lambda(x;\boldsymbol\theta) 
           = \boldsymbol{0}
    \label{e:LR_Score}
\end{equation}
is an unbiased estimating equation. It is interesting to observe that the middle part of (\ref{e:LR_Score}) is exactly the gradient of a logistic log-likelihood function
\begin{equation}
    l(X,D;\boldsymbol\theta) = \sum_{x\in X} \log\left[
          \frac{\lambda(x;\boldsymbol\theta)}{\lambda(x;\boldsymbol\theta)+\rho(x)}
    \right] + \sum_{x\in D}\log\left[
          \frac{\rho(x)}{\lambda(x;\boldsymbol\theta)+\rho(x)}
    \right].
    \label{e:LR_likelihood}
\end{equation}
Thus, (\ref{e:LR_Score}) can also be interpreted as a score function and solved using standard software such as the R-package \textit{stats} \citep{Venables2002stats}. The existence
and uniqueness of a maximizer of the log-likelihood function (\ref{e:LR_likelihood}) are ensured under proper conditions \citep{Silvapulle1981LRestimates}.

For simplicity of writing, in the remainder of this paper, we write $\boldsymbol{s}(\boldsymbol\theta)$ and $l(\boldsymbol\theta)$ for $\boldsymbol{s}(X,D;\boldsymbol\theta)$ and $l(X,D;\boldsymbol\theta)$ to suppress the dependence on the point patterns $X$ and $D$. Moreover, we use $\boldsymbol{\theta_0}$ and $\widehat{\boldsymbol\theta}$ to respectively denote the true value and the estimator of $\boldsymbol\theta$.

\subsection{First two moments of first-order \texorpdfstring{$U$}{}-statistics}
\label{subsec:momentmeasures}

Following \citet{Reitzner2013CLT}, we call random vectors of the form
\begin{equation*}
    \bm{H} = \sum_{x\in X} \left[ h^{(1)}(x), \dots, h^{(m)}(x) \right]^\top
\end{equation*}
first-order $U$-statistics of $X$ if $h^{(l)}\lambda$ is absolutely integrable for every $1 \leq l \leq m$. 

Suppose that $X$ has intensity function $\lambda^{(1)} =
\lambda$ and second-order product density function $\lambda^{(2)}$. By the Campbell--Mecke theorem and in analogy to (\ref{e:campbell-mecke}),
\begin{equation*}
    \mathbb{E}\left\{ \sJM_{x_{1},\dots,x_{r}\in X} 
       f\left(x_{1},\dots,x_{r}\right)\right\} = 
    \int_{(W\times T)^{r}} f\left(u_{1},\dots,u_{r}\right) 
       \lambda^{(r)}\left(u_{1},\dots,u_{r}\right) du_{1}\dots du_{r}
\end{equation*}
for $r=1, 2$ and any real-valued measurable function $f$ defined on $W\times T$ such that $f\lambda$ and $f\lambda^{(2)}$ are absolutely integrable. Then, it can be used to derive the first two moments of $\bm{H}$. Indeed,
\begin{equation*}
    \mathbb{E}\left\{ \bm{H} \right\} =  
    \left[\int_{W\times T} h^{(l)}(u)\lambda\left(u\right) du \right]_{l=1}^{m}
\end{equation*}
and 
\begin{equation*}
    \mathbb{E}\left\{ \bm{H} \bm{H}^{\top} \right\} =  
    \left[
    \int_{W\times T} h^{(k)}\left(u\right)h^{(l)}\left(u\right) \lambda\left(u\right) du +
    \int_{W\times T}\int_{W\times T} h^{(k)}(u)h^{(l)}(v) \lambda^{(2)}\left(u,v\right) dudv
    \right]_{k,l=1}^{m}.
\end{equation*}

If $\mathbb{E}\left\{ \bm{H} \bm{H}^{\top} \right\}$ is finite and $\lambda>0$, the entries in the covariance matrix of $\bm{H}$ are finite and can be expressed in terms of the pair correlation function $g(u,v)$ as
\begin{equation*}
\begin{split}
    \Cov\left\{ \bm{H} \right\} 
    &=  
    \left[
    \int_{W\times T} h^{(k)}\left(u\right)h^{(l)}\left(u\right)  \lambda\left(u\right) du
    \right]_{k,l=1}^{m}\\
    &+
    \left[\int_{W\times T}\int_{W\times T} h^{(k)}(u)h^{(l)}(v) (g(u,v)-1) \lambda\left(u\right) \lambda\left(v\right) dudv
    \right]_{k,l=1}^{m}.
\end{split}
\end{equation*}

\subsection{Infill asymptotic regime}
\label{subsec:infillasymptotics}

The infill asymptotic regime considered in this paper is as defined in 
\citet{Waagepetersen2008estimatingFunction} and \citet{Lieshout2020infillAsymptotics}. 

Let $\{Y_i\}$ and $\{E_i\}$, with $i \in \mathbb{N}^{+}$, be two sequences of independent and identically distributed spatio-temporal point processes with intensity functions
$\lambda$ and $\rho$, respectively. Set 
\begin{equation*}
    X_n = \bigcup_{i=1}^{n} Y_i, \quad D_n = \bigcup_{i=1}^{n} E_i.
\end{equation*}
Write $\lambda_n$ for the intensity function of $X_n$ and $\rho_n$ for that of the `dummy' point process $D_n$. Thus, $\lambda_{n} = n \lambda$
and $\rho_{n} = n \rho$, and we assume that the intensity functions of $X_{n}$ and $D_{n}$ increase at the same rate. For any $n\in \mathbb{N}^{+}$, the score function (\ref{e:LR_Score}) based on $X_n$ and $D_n$ then becomes
\begin{equation}
\begin{split}
    \boldsymbol{s}_{n}(\boldsymbol\theta) & =  \sum_{x\in X_{n}} 
    \frac{\rho_{n}(x)/\lambda_{n}(x;\boldsymbol\theta)}
            {\lambda_{n}(x;\boldsymbol\theta) + \rho_{n}(x)}
      \nabla\lambda_{n}(x;\boldsymbol\theta)
    -  \sum_{x\in D_{n}}
    \frac{1}{\lambda_{n}(x;\boldsymbol\theta) + \rho_{n}(x)} 
    \nabla\lambda_{n}(x;\boldsymbol\theta)\\
    & = \sum_{x\in X_{n}}
    \frac{\rho(x)/\lambda(x;\boldsymbol\theta)}
           {\lambda(x;\boldsymbol\theta) + \rho(x)}
     \nabla\lambda(x;\boldsymbol\theta)
     - \sum_{x\in D_{n}}
     \frac{1}{\lambda(x;\boldsymbol\theta) + \rho(x)}
     \nabla\lambda(x;\boldsymbol\theta) = 
     \boldsymbol{0}.
    \label{e:LR_score_n}
\end{split}
\end{equation}
Note that the terms in the two sums above do not depend on $n$ after simplification while the subscripts $x$ run through the points of $X_{n}$ and $D_{n}$. Taking the limit as $n\to \infty$, one obtains an asymptotic regime that \citet{Ripley1988asymptoticRegimes} calls `infill asymptotics'.

Intuitively, under this regime, the estimate for the parameter vector $\boldsymbol\theta$ will become more precise when the points observed in the fixed domain $W\times T$ become more dense. 

Our aim in the remainder of this paper is to analyze the
asymptotic behaviour of the estimator $\widehat{\boldsymbol{\theta}_{n}}$ defined by (\ref{e:LR_score_n}) as $n\rightarrow\infty$.

\section{Infill asymptotics for Poisson point processes}
\label{sec:asymptotics}

For the sake of completeness, we first recall the definition of Poisson point processes (see, e.g., \citealp{Lieshout2000}; \citealp{Daley2009CampbellMecke}).

\begin{definition}
A spatio-temporal point process $X$ defined as in the beginning of Section \ref{sec:background} is a Poisson point process if it satisfies the following properties: 
\begin{itemize}[itemsep=0.7pt]
    \item[(i)] for any bounded Borel set $B\subset W\times T$, the number of points that fall in $B$ is Poisson distributed with mean $\Lambda(B)$;
    \item[(ii)] in disjoint bounded Borel sets $B_{1}, B_{2} \subset W\times T$, the numbers of points that fall in $B_{1}, B_{2}$ are independent.
\end{itemize}
\end{definition}

The probability distribution of a Poisson point process $X$ is
completely specified by its
first-order moment measure $\Lambda$. Thus, the assumption of a log-linear intensity function parameterized as (\ref{e:intensity}) is quite natural and constitutes an exponential family with the components of $\sum_{x \in X} \bm{z}(x)$ as sufficient statistics. Moreover, the pair correlation function of $X$ is always equal to one. The infill asymptotic regime (cf., Section \ref{subsec:infillasymptotics}) is 
then particularly appropriate for a Poisson point process model because of the conditional independence of the points given a fixed number.

In this section, we establish strong consistency and asymptotic normality for logistic regression estimators in the case of spatio-temporal Poisson point processes. 
For ease of referencing, we list here the conditions ($\mathcal{C}.1$)--($\mathcal{C}.8$) that are required to derive the asymptotic results:

\vspace{-\topsep}
\begin{description}[itemsep=0.1pt]
\item[$(\mathcal{C}.1)$]  $\{Y_i\}$ and $\{E_i\}$ with $i \in \mathbb{N}^{+}$ are two independent sequences of independent and identically distributed 
spatio-temporal point processes on some bounded open set $W\times T \subset \mathbb{R}^2 \times \mathbb{R}$ and defined on the same underlying probability space $\Omega$.  Set $X_n = \cup_{i=1}^n Y_i$ and $D_n = \cup_{i=1}^n E_i$.

\item[$(\mathcal{C}.2)$] $Y_i$ is a Poisson point 
process with intensity function $\lambda(u; \boldsymbol\theta)$ as (\ref{e:intensity}), where $b>0$ is an absolutely integrable function, $\bm{z}$ is a measurable vector of
covariates and the parameter vector $\boldsymbol \theta$ lies in an open set $\boldsymbol\Theta \subset \mathbb{R}^m$; $E_i$ has absolutely integrable intensity function $\rho(u) > 0$. 

\item[$(\mathcal{C}.3)$] $E_{i}$ has bounded pair correlation function $g$, that is,
$\sup_{(u,v)\in (W\times T)^{2}} g(u,v)$ $<\infty$.

\item[$(\mathcal{C}.4)$] For every $\boldsymbol\theta \in \boldsymbol\Theta$, there exist $\epsilon_{1}(\boldsymbol \theta), \epsilon_{2}(\boldsymbol \theta) > 0$ such that $\epsilon_{1}(\boldsymbol \theta) < \inf_{u\in W\times T} \rho(u) / \lambda(u; \boldsymbol \theta)$ and $ \sup_{u\in W\times T} \rho(u) / \lambda(u; \boldsymbol \theta) < \epsilon_{2}(\boldsymbol \theta)$.

\item[$(\mathcal{C}.5)$] The elements of the measurable covariate vector $\bm{z}$ are bounded, that is,
$\sup_{u\in W\times T}\|\bm{z}(u)\| $ $<\infty$.

\item[$(\mathcal{C}.6)$] The parameter space $\boldsymbol\Theta$ is convex.

\item[$(\mathcal{C}.7)$] The parametric model for $\lambda$ is identifiable, that is, $\lambda(u; \boldsymbol \theta) = \lambda(u; \boldsymbol{\tilde{\theta}})$ almost everywhere on $W\times T$ implies $\boldsymbol\theta = \boldsymbol{\tilde{\theta}}$.

\item[$(\mathcal{C}.8)$] The matrix 
$\bm{U}$, whose ($k,l$)-th entry reads
$\int_{W\times T}
\lambda(u;\boldsymbol{\theta_{0}}) \rho(u)  z^{(k)}(u) z^{(l)}(u)/
(\lambda(u;\boldsymbol{\theta_{0}}) + \rho(u))
   du$ with $\boldsymbol{\theta_{0}} \in \boldsymbol\Theta$ and
$1\leq k,l \leq m$, is positive definite.
\end{description}
\vspace{-\topsep}

A few remarks on some of the conditions are appropriate. In condition ($\mathcal{C}.2$), $Y_{i}$ is
assumed to be a Poisson point process, however, it may be relaxed (cf.,\ Section~\ref{subsec:generalpointprocess}).
In condition ($\mathcal{C}.3$), $E_{i}$ is not required to be a Poisson point process, as its realisations are
only used to approximate an integral. 
Condition ($\mathcal{C}.4$) is reasonable, recalling the rule of thumb recommended by \citet{Baddeley2014LRPP} for selecting $\rho(u)$ that $\rho=4\lambda$.
Condition ($\mathcal{C}.7$) is necessary for strong consistency (cf.,\ Theorem~\ref{theorem:consistency}) and condition ($\mathcal{C}.8$) helps ensure the attainment of the logistic regression estimator $\widehat{\boldsymbol{\theta}_{n}}$ as $n \to \infty$ (cf.,\ Theorem~\ref{theorem:asymptotics}).

In the remainder of this paper, we will use $P_{\boldsymbol{\theta_0}}$ to denote the distribution of $(X_n, D_n)$ under the true parameter value $\boldsymbol{\theta_0}$.

\subsection{Strong consistency}
\label{sec:consistency}

To establish strong consistency, we start our investigations with the asymptotic behaviour of the scaled log-likelihood function. 

\begin{lemma}
\label{lemma:4_1}
Assume that the conditions
($\mathcal{C}.1$)--($\mathcal{C}.2$) and
($\mathcal{C}.4$)--($\mathcal{C}.5$) hold. Define 
$l_n(\boldsymbol\theta) = l(X_n, D_n; \boldsymbol \theta)$ by (\ref{e:LR_likelihood})
with $\boldsymbol\theta\in \boldsymbol\Theta$. 
Then, as $n\to\infty$,  $l_{n}(\boldsymbol\theta)/n$
converges $P_{\boldsymbol{\theta_0}}$-almost surely
to
\begin{equation*}
 \int_{W\times T}
 \left\{\lambda(u;\boldsymbol{\theta_{0}})
    \log\left[
       \frac{\lambda(u;\boldsymbol\theta)}
       {\lambda(u;\boldsymbol\theta)+\rho(u)}
    \right]
    + \rho(u)
    \log\left[
       \frac{\rho(u)}
       {\lambda(u;\boldsymbol\theta)+\rho(u)}
    \right]
\right\}du.
\end{equation*}
\end{lemma}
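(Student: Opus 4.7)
The plan is to exploit the additive i.i.d.\ structure given by the infill regime. Because $X_n=\bigcup_{i=1}^n Y_i$ and $D_n=\bigcup_{i=1}^n E_i$ with $(Y_i,E_i)$ i.i.d.\ across $i$ by condition $(\mathcal{C}.1)$, the log-likelihood splits as $l_n(\boldsymbol\theta)=\sum_{i=1}^n L_i(\boldsymbol\theta)$, where
\begin{equation*}
L_i(\boldsymbol\theta)=\sum_{x\in Y_i}\log\frac{\lambda(x;\boldsymbol\theta)}{\lambda(x;\boldsymbol\theta)+\rho(x)}+\sum_{x\in E_i}\log\frac{\rho(x)}{\lambda(x;\boldsymbol\theta)+\rho(x)}.
\end{equation*}
The random variables $L_i(\boldsymbol\theta)$ are i.i.d.\ under $P_{\boldsymbol{\theta_0}}$, so the assertion reduces to Kolmogorov's strong law of large numbers applied to them, once I verify that $\mathbb{E}_{\boldsymbol{\theta_0}}|L_1(\boldsymbol\theta)|<\infty$ and identify $\mathbb{E}_{\boldsymbol{\theta_0}}L_1(\boldsymbol\theta)$ with the integral displayed in the statement.

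For the integrability step, I would use $(\mathcal{C}.4)$ to note that
$\lambda(u;\boldsymbol\theta)/(\lambda(u;\boldsymbol\theta)+\rho(u))=1/(1+\rho(u)/\lambda(u;\boldsymbol\theta))$ lies in a closed subinterval of $(0,1)$ depending only on $\boldsymbol\theta$, and similarly for $\rho(u)/(\lambda(u;\boldsymbol\theta)+\rho(u))$. Consequently, both logarithmic integrands are uniformly bounded in $u$ by finite constants $C_1(\boldsymbol\theta)$ and $C_2(\boldsymbol\theta)$, so that $|L_1(\boldsymbol\theta)|\le C_1(\boldsymbol\theta)|Y_1|+C_2(\boldsymbol\theta)|E_1|$. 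By the Campbell--Mecke formula (\ref{e:campbell-mecke}) applied to each of $Y_1$ and $E_1$, $\mathbb{E}_{\boldsymbol{\theta_0}}|Y_1|=\int_{W\times T}\lambda(u;\boldsymbol{\theta_0})\,du$ and $\mathbb{E}|E_1|=\int_{W\times T}\rho(u)\,du$, and both are finite: the former because $(\mathcal{C}.5)$ yields $\exp(\boldsymbol{\theta_0}^\top\bm{z}(u))\le\exp(\|\boldsymbol{\theta_0}\|\sup_u\|\bm{z}(u)\|)<\infty$, so absolute integrability of $\lambda(\cdot;\boldsymbol{\theta_0})$ is inherited from that of $b$ assumed in $(\mathcal{C}.2)$, and the latter by $(\mathcal{C}.2)$ directly.

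To identify the limit I apply the Campbell--Mecke formula a second time, now to the individual sums inside $L_1(\boldsymbol\theta)$. The boundedness of the integrands and the absolute integrability of $\lambda(\cdot;\boldsymbol{\theta_0})$ and $\rho$ make Campbell--Mecke directly applicable, giving
\begin{equation*}
\mathbb{E}_{\boldsymbol{\theta_0}}L_1(\boldsymbol\theta)=\int_{W\times T}\log\frac{\lambda(u;\boldsymbol\theta)}{\lambda(u;\boldsymbol\theta)+\rho(u)}\,\lambda(u;\boldsymbol{\theta_0})\,du+\int_{W\times T}\log\frac{\rho(u)}{\lambda(u;\boldsymbol\theta)+\rho(u)}\,\rho(u)\,du,
\end{equation*}
which is exactly the expression in the lemma.

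The main obstacle is essentially cosmetic rather than structural: one must be careful to check that condition $(\mathcal{C}.4)$ really does prevent the logarithms from blowing up (ruling out $\log 0$ pathologies at either end), and that $(\mathcal{C}.2)$ together with $(\mathcal{C}.5)$ genuinely forces $\lambda(\cdot;\boldsymbol{\theta_0})$ to be absolutely integrable so that Campbell--Mecke applies. Once these bookkeeping issues are dispatched, the rest of the argument is a direct appeal to the classical SLLN for i.i.d.\ real-valued summands.
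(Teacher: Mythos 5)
Your proposal is correct and follows essentially the same route as the paper: bound the logarithmic integrands via $(\mathcal{C}.4)$, deduce absolute integrability of $\lambda(\cdot;\boldsymbol{\theta_0})$ from $(\mathcal{C}.2)$ and $(\mathcal{C}.5)$, identify the mean by Campbell--Mecke, and conclude with Kolmogorov's strong law. The only (immaterial) difference is that you apply the SLLN once to the combined i.i.d.\ summands $L_i(\boldsymbol\theta)$, whereas the paper applies it separately to the two first-order $U$-statistics on $X_n$ and $D_n$ and then adds the two almost sure limits.
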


\begin{proof}
Under conditions ($\mathcal{C}.1$)--($\mathcal{C}.2$) and recalling the logistic log-likelihood function (\ref{e:LR_likelihood}),
\begin{equation*}
    \frac{l_{n}(\boldsymbol\theta)}{n} = 
       \frac{1}{n} \sum_{x\in X_{n}} \log\left[
          \frac{\lambda(x;\boldsymbol\theta)}{\lambda(x;\boldsymbol\theta)+\rho(x)}
       \right]+
       \frac{1}{n} \sum_{x\in D_{n}} \log\left[
          \frac{\rho(x)}{\lambda(x;\boldsymbol\theta)+\rho(x)}
       \right],
\end{equation*}
which consists of two first-order $U$-statistics
defined on $X_{n}$ and $D_{n}$. We shall derive 
their strong convergence separately.

Write the first-order $U$-statistic defined on $X_n$ as
\begin{equation*}
    \frac{1}{n} \sum_{x\in X_{n}} \log\left[
       \frac{\lambda(x;\boldsymbol\theta)}
       {\lambda(x;\boldsymbol\theta)+\rho(x)}
    \right]
    = \frac{1}{n} \sum_{i=1}^{n} 
    \left\{ 
    \sum_{x\in Y_i} \log\left[ 
       \frac{\lambda(x;\boldsymbol\theta)}
       {\lambda(x;\boldsymbol\theta)+\rho(x)}
       \right] \right\}.
\end{equation*}
The sum is the average of independent and identically
distributed real-valued random variables. By the
Campbell--Mecke theorem,
\begin{equation}
\mathbb{E}_{\boldsymbol{\theta_0}} \left\{
\sum_{x\in Y_i} \log\left[ 
 \frac{\lambda(x;\boldsymbol\theta)}
 {\lambda(x;\boldsymbol\theta)+\rho(x)} \right] 
 \right\} =
 \int_{W\times T}
 \log\left[ 
  \frac{\lambda(u;\boldsymbol\theta)}
  {\lambda(u;\boldsymbol\theta)+\rho(u)}
  \right]
  \lambda(u;\boldsymbol{\theta_0}) \, du.
 \label{e:SL-mean}
 \end{equation}
Conditions ($\mathcal{C}.2$) and ($\mathcal{C}.5$) imply that the intensity function 
$\lambda( u;\boldsymbol\theta_{0})$ is
absolutely integrable 
on $W\times T$. Furthermore, by conditions
($\mathcal{C}.2$) and ($\mathcal{C}.4$),
\begin{equation*}
\log\left[ \frac{1}{1+\epsilon_{2}(\boldsymbol\theta)} \right]  <  \log\left[ 
       \frac{1}{ 1 + \rho(u) / \lambda(u;\boldsymbol\theta)} \right] 
       =
        \log\left[ 
       \frac{\lambda(u;\boldsymbol\theta)}
       {\lambda(u;\boldsymbol\theta)+\rho(u)} \right] < 
       \log\left[ \frac{1}{1+\epsilon_{1}(\boldsymbol\theta)} \right].
\end{equation*}
Thus, the $P_{\boldsymbol{\theta_0}}$-mean in
(\ref{e:SL-mean}) is finite for all
$\boldsymbol{\theta}$. Then, Kolmogorov's
strong law of large numbers implies that the
first-order $U$-statistic defined on $X_n$ converges
$P_{\boldsymbol{\theta_0}}$-almost surely to the integral in the right-hand side of equation
(\ref{e:SL-mean}).

Similarly, for the first-order $U$-statistic defined
on the dummy point process $D_n$, as $n\to\infty$,
\begin{equation*}
\begin{split}
   \frac{1}{n} \sum_{x\in D_{n}} \log\left[
      \frac{\rho(x)}{\lambda(x;\boldsymbol\theta)+\rho(x)}
   \right]
 & \stackrel{P_{\boldsymbol{\theta_0}}-a.s.}{\to} \mathbb{E}_{\boldsymbol{\theta_0}}\left\{
    \sum_{x\in E_i}\log\left[
        \frac{\rho(x)}{\lambda(x;\boldsymbol\theta)+\rho(x)}
    \right] \right\} \\
  & = \int_{W\times T} \log\left[
     \frac{\rho(u)}{\lambda(u;\boldsymbol\theta)+\rho(u)}
  \right]\rho(u)du
\end{split}
\end{equation*}
because $\rho(u)$ is absolutely integrable by condition ($\mathcal{C}.2$) and 
\begin{equation*}
\log\left[ 
 \frac{\epsilon_{1}(\boldsymbol\theta)}
 {1+\epsilon_{1}(\boldsymbol\theta)} 
 \right]  <  \log\left[ 
       \frac{\rho(u) / \lambda(u;\boldsymbol\theta)}
         { 1+\rho(u) / \lambda(u;\boldsymbol\theta)} \right]
 = \log\left[ 
       \frac{\rho(u)}{\lambda(u;\boldsymbol\theta)+\rho(u)} \right] < \log\left[ 
 \frac{\epsilon_{2}(\boldsymbol\theta)}
 {1+\epsilon_{2}(\boldsymbol\theta)} 
 \right] 
\end{equation*}
by conditions ($\mathcal{C}.2$) and ($\mathcal{C}.4$). 

The proof is then completed by combining these two
strong convergence results.
\end{proof}

Recall that the logistic regression estimator 
minimizes 
$U_n(\boldsymbol\theta) =
-l_{n}(\boldsymbol\theta)/{n}$ 
and thus can be considered as a minimum contrast estimator.
The next theorem is then concerned with strong consistency.
%

\begin{theorem}
\label{theorem:consistency}
Assume that the conditions
($\mathcal{C}.1$)--($\mathcal{C}.2$) and
($\mathcal{C}.4$)--($\mathcal{C}.7$) hold. Define 
$l_n(\boldsymbol\theta) =
 l(X_n, D_n; \boldsymbol \theta)$ by
(\ref{e:LR_likelihood}) with 
$\boldsymbol\theta\in \boldsymbol\Theta$ and set
$\widehat{\boldsymbol{\theta}_{n}}
= \argmax_{\boldsymbol\theta \in \boldsymbol{\Theta}}
l_{n}(\boldsymbol\theta)$. If $\widehat{\boldsymbol{\theta}_{n}}$ is attained,
then
as $n\rightarrow \infty$,
$\widehat{\boldsymbol{\theta}_{n}}$ converges
$P_{\boldsymbol{\theta_0}}$-almost surely to
$\boldsymbol{\theta_0}$.
\end{theorem}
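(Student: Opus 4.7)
My plan is to treat $\widehat{\boldsymbol{\theta}_n}$ as an M-estimator of the (concave) contrast function delivered by Lemma~\ref{lemma:4_1}, and to exploit concavity of $l_n(\boldsymbol\theta)/n$ in $\boldsymbol\theta$ to sidestep the fact that $\boldsymbol\Theta$ is open rather than compact. Denote the limit in Lemma~\ref{lemma:4_1} by $L(\boldsymbol\theta)$. Writing $\pi(u;\boldsymbol\theta)=\lambda(u;\boldsymbol\theta)/(\lambda(u;\boldsymbol\theta)+\rho(u))\in(0,1)$, a direct rearrangement gives
\begin{equation*}
L(\boldsymbol{\theta_0})-L(\boldsymbol\theta)=\int_{W\times T}\bigl(\lambda(u;\boldsymbol{\theta_0})+\rho(u)\bigr)\,D_{\mathrm{KL}}\!\left(\mathrm{Ber}(\pi(u;\boldsymbol{\theta_0}))\,\|\,\mathrm{Ber}(\pi(u;\boldsymbol\theta))\right)du \geq 0,
\end{equation*}
so by the Gibbs inequality the integrand is non-negative and vanishes only when $\pi(u;\boldsymbol\theta)=\pi(u;\boldsymbol{\theta_0})$ almost everywhere. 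Since $\pi$ is strictly increasing in $\lambda$, this forces $\lambda(u;\boldsymbol\theta)=\lambda(u;\boldsymbol{\theta_0})$ a.e., and identifiability ($\mathcal{C}.7$) then yields $\boldsymbol\theta=\boldsymbol{\theta_0}$. Hence $\boldsymbol{\theta_0}$ is the unique maximiser of $L$ on $\boldsymbol\Theta$.

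Next I would observe that, writing $\log b = \alpha$ and using $\lambda(u;\boldsymbol\theta)=\exp(\alpha(u)+\boldsymbol\theta^\top\bm{z}(u))$, both $\log[\lambda/(\lambda+\rho)]$ and $\log[\rho/(\lambda+\rho)]$ equal an affine function of $\boldsymbol\theta$ minus the convex log-sum-exp $\log(e^{\alpha+\boldsymbol\theta^\top\bm{z}}+\rho)$. Therefore $l_n(\boldsymbol\theta)/n$ is concave in $\boldsymbol\theta$ for every realisation, and so is $L$. Lemma~\ref{lemma:4_1} gives $P_{\boldsymbol{\theta_0}}$-a.s.\ pointwise convergence $l_n(\boldsymbol\theta)/n\to L(\boldsymbol\theta)$; applying this on a countable dense subset of the open convex set $\boldsymbol\Theta$ and invoking the standard convex-analysis fact (Rockafellar, \emph{Convex Analysis}, Thm.~10.8) that pointwise convergence of concave functions on a dense subset of an open convex set extends to uniform convergence on every compact subset, I obtain a single $P_{\boldsymbol{\theta_0}}$-almost sure event $\Omega_0$ on which $l_n/n\to L$ uniformly on compacta of $\boldsymbol\Theta$.

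It remains to conclude consistency on $\Omega_0$. Fix $\epsilon>0$ small enough that the closed ball $\overline{B}_\epsilon(\boldsymbol{\theta_0})\subset\boldsymbol\Theta$; on the compact sphere $S_\epsilon=\partial B_\epsilon(\boldsymbol{\theta_0})$ the continuous concave function $L$ attains a maximum $L(\boldsymbol{\theta_0})-2\delta$ for some $\delta>0$. Suppose toward contradiction that on $\Omega_0$ some realisation has $\|\widehat{\boldsymbol\theta_{n_k}}-\boldsymbol{\theta_0}\|>\epsilon$ along a subsequence. For each such $n_k$ let $\boldsymbol\theta_{n_k}^\star\in S_\epsilon$ be the point where the segment from $\boldsymbol{\theta_0}$ to $\widehat{\boldsymbol\theta_{n_k}}$ meets $S_\epsilon$. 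Concavity of $l_{n_k}/n_k$ and the fact that $\widehat{\boldsymbol\theta_{n_k}}$ is a maximiser give $l_{n_k}(\boldsymbol\theta_{n_k}^\star)/n_k\geq l_{n_k}(\boldsymbol{\theta_0})/n_k$. By uniform convergence on the compact $S_\epsilon$, the left-hand side satisfies $\limsup_k l_{n_k}(\boldsymbol\theta_{n_k}^\star)/n_k\leq\sup_{S_\epsilon}L\leq L(\boldsymbol{\theta_0})-2\delta$, while the right-hand side tends to $L(\boldsymbol{\theta_0})$, a contradiction. Hence $\widehat{\boldsymbol\theta_n}\to\boldsymbol{\theta_0}$ $P_{\boldsymbol{\theta_0}}$-almost surely.

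The main obstacle, to my mind, is not the information-theoretic uniqueness step but the passage from the pointwise limit in Lemma~\ref{lemma:4_1} to a usable global statement, because $\boldsymbol\Theta$ is only open and $\widehat{\boldsymbol\theta_n}$ need not stay in a pre-specified compact set. Concavity solves both halves of this problem simultaneously: it upgrades a.s.\ pointwise convergence to a.s.\ uniform convergence on compacta, and it turns the behaviour of $L$ on a single sphere around $\boldsymbol{\theta_0}$ into global separation from the maximum, allowing the boundary-crossing argument above to rule out escape of $\widehat{\boldsymbol\theta_n}$ from any neighbourhood of $\boldsymbol{\theta_0}$.
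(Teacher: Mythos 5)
Your argument is correct, and its skeleton matches the paper's: concavity of the logistic log-likelihood in $\boldsymbol\theta$, plus identification of the a.s.\ limit from Lemma~\ref{lemma:4_1} as a contrast that is uniquely maximised at $\boldsymbol{\theta_0}$ under ($\mathcal{C}.7$). The differences are in how the two halves are executed. For uniqueness, the paper studies the elementary function $a\mapsto a\log a-(a+b)\log[(a+b)/(1+b)]$ and shows the integrand $k(u;\boldsymbol\theta,\boldsymbol{\theta_0})$ is non-negative and vanishes only when $\lambda(u;\boldsymbol\theta)=\lambda(u;\boldsymbol{\theta_0})$; your rewriting of $L(\boldsymbol{\theta_0})-L(\boldsymbol\theta)$ as $\int(\lambda(u;\boldsymbol{\theta_0})+\rho(u))\,D_{\mathrm{KL}}(\mathrm{Ber}(\pi(u;\boldsymbol{\theta_0}))\,\|\,\mathrm{Ber}(\pi(u;\boldsymbol\theta)))\,du$ is an equivalent and arguably more transparent route via Gibbs' inequality (one checks the algebra matches the paper's $K(\boldsymbol\theta,\boldsymbol{\theta_0})$, and it does). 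For concavity, the paper factorises the Hessian of $-l_n$ as $\bm{M}^\top\bm{M}$, while you note each summand is affine minus a log-sum-exp term; both are fine. The genuine divergence is the final step: the paper simply appeals to the proposition below \citet[Theorem~3.4.4]{Guyon1995consistency} for minimum contrast estimation with convex contrasts, whereas you supply that argument yourself — pointwise a.s.\ convergence on a countable dense set, upgraded to uniform convergence on compacta by the convexity theorem (Rockafellar, Thm.~10.8), followed by the sphere-crossing argument that uses concavity to prevent escape of $\widehat{\boldsymbol\theta_n}$ from $\overline{B}_\epsilon(\boldsymbol{\theta_0})$. This makes your proof self-contained where the paper's is by citation; the small points you rely on implicitly (finiteness of $L$ on $\boldsymbol\Theta$, hence continuity of the concave limit on the sphere $S_\epsilon$, and strict separation $\sup_{S_\epsilon}L<L(\boldsymbol{\theta_0})$ from uniqueness of the maximiser) all hold under the stated conditions, so there is no gap.
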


\begin{proof}
Conditions ($\mathcal{C}.2$) and
($\mathcal{C}.6$) ensure that the parameter space
$\boldsymbol\Theta$ is open and convex. 

Firstly, we prove that, for every $\omega$ in the
underlying probability space $\Omega$, the realisations of the
function
$\boldsymbol\theta \mapsto U_{n}(\boldsymbol\theta)$
are convex. By condition ($\mathcal{C}.2$), the
intensity function $\lambda(x;\boldsymbol\theta)$ 
has the log-linear form (\ref{e:intensity})
and is thus twice differentiable with respect to the
parameter vector $\boldsymbol\theta \in \boldsymbol\Theta$.
Then, the Hessian matrix of
$-l_{n}(\boldsymbol\theta)$ reads
\begin{equation*}
    \left[ \sum_{x\in X_{n} \cup D_{n}} 
        \frac{\lambda(x;\boldsymbol\theta)\rho(x)}
        {\left(\lambda(x;\boldsymbol\theta) + \rho(x)\right)^{2}}
        z^{(k)}(x) z^{(l)}(x)
     \right]_{k,l=1}^{m}.
\end{equation*}
By decomposition, it can be written into the product of a matrix $\bm{M}$ and its transpose as $\bm{M}^{\top}\bm{M}$, where $\bm{M}$ is a [$|X_{n}\cup D_{n}|\times m$]-matrix given by
\begin{equation*}
   \bm{M} = \left[
       \frac{(\lambda(x_{i};\boldsymbol\theta) \rho(x_{i}))^{1/2}}
         {\lambda(x_{i};\boldsymbol\theta) + \rho(x_{i})}
     z^{(k)}(x_{i})
   \right]_{i=1,k=1}^{\left|X_{n}\cup D_{n}\right|,m}.
\end{equation*}
Here, $|X_{n}\cup D_{n}|$ denotes the number of
points in $X_{n}\cup D_{n}$, the subscript $i$ runs
through all points in $X_{n}\cup D_{n}$ and $k$ runs
through the $m$ covariates. One can readily
obtain that the Hessian matrix of
$-l_{n}(\boldsymbol\theta)$ is positive semi-definite for every $\boldsymbol\theta \in \boldsymbol\Theta$,
which implies that the function
$\boldsymbol\theta \mapsto -l_{n}(\boldsymbol\theta)$, and thus
$-l_n(\boldsymbol\theta) / n$, is convex.

Secondly, we prove that, as $n\rightarrow\infty$, 
$U_{n}(\boldsymbol\theta) - U_{n}(\boldsymbol{\theta_0})$ converges
$P_{\boldsymbol{\theta_0}}$-almost surely to a
function $K(\boldsymbol\theta,\boldsymbol{\theta_0})$
that is non-negative and vanishes only at
$\boldsymbol\theta = \boldsymbol{\theta_0}$. 
By Lemma \ref{lemma:4_1}, under
conditions ($\mathcal{C}.1$)--($\mathcal{C}.2$) 
and ($\mathcal{C}.4$)--($\mathcal{C}.5$),
$U_n(\boldsymbol\theta) - U_n(\boldsymbol{\theta_0})$
converges $P_{\boldsymbol{\theta_0}}$-almost surely to
\begin{equation*}
\int_{W\times T} \left\{ \lambda(u;\boldsymbol{\theta_0})
    \log\left[  
          \frac{\lambda(u;\boldsymbol{\theta_0})}
          {\lambda(u;\boldsymbol\theta)}
    \right]
- \left(  \lambda(u;\boldsymbol{\theta_0}) + \rho(u) \right)
  \log \left[
         \frac{\lambda(u;\boldsymbol{\theta_0}) + \rho(u)}
         {\lambda(u;\boldsymbol\theta) + \rho(u)}
   \right]
\right\} du.
\end{equation*}
Denote this limit by
$K(\boldsymbol\theta,\boldsymbol{\theta_0})$ 
and the integrand by
$k(u;\boldsymbol\theta,\boldsymbol{\theta_0})$. 
Then, clearly 
$K(\boldsymbol{\theta_0}, \boldsymbol{\theta_0})
= 0$. Furthermore,
\begin{equation*}
 k(u;\boldsymbol\theta,\boldsymbol{\theta_0}) = 
     \lambda(u;\boldsymbol\theta) \left\{
        \frac{\lambda(u;\boldsymbol{\theta_0})}
             {\lambda(u;\boldsymbol\theta)}
  \log\left[ 
     \frac{\lambda(u;\boldsymbol{\theta_0})}
          {\lambda(u;\boldsymbol\theta)} 
  \right]
- \left(\frac{\lambda(u;\boldsymbol{\theta_0})}
             {\lambda(u;\boldsymbol\theta)} + 
   \frac{\rho(u)}{\lambda(u;\boldsymbol\theta)}\right)
  \log\left[
     \frac{\frac{\lambda(u;\boldsymbol{\theta_0})}
          {\lambda(u;\boldsymbol\theta)}
          + \frac{\rho(u)}{\lambda(u;\boldsymbol\theta)}}
   {1+\frac{\rho(u)}{\lambda(u;\boldsymbol\theta)}}
  \right]
\right\}.
\end{equation*}
By condition ($\mathcal{C}.2$), 
$\lambda(u;\boldsymbol\theta)>0$ as $b(u)>0$.
Consider the function 
$a\mapsto a\log a-(a+b)\log[(a+b)/(1+b)]$ 
with $a,b>0$. Its derivative with respect to $a$ is
$\log[a(1+b)/(a+b)]$. Hence, the function is strictly
decreasing when $a\in(0,1)$ and strictly increasing
when $a\in(1,+\infty)$. For $a=1$, 
$a\log a-(a+b)\log[(a+b)/(1+b)] = 0$. Thus,
$k(u;\boldsymbol\theta,\boldsymbol{\theta_0})$ is
non-negative, and is strictly positive when
$\lambda(u;\boldsymbol \theta) \neq
\lambda(u;\boldsymbol{\theta_0})$.

Under condition ($\mathcal{C}.7$), 
strong consistency follows from an appeal to the Proposition below
\citet[Theorem~3.4.4]{Guyon1995consistency}.
\end{proof}

\begin{remark}
It is important to note that the proofs of Lemma~\ref{lemma:4_1} and Theorem~\ref{theorem:consistency} do not depend on 
the assumption that $Y_{i}$, and thus $X_n$, is a Poisson point process. However, strong consistency relies on the concavity of the log-likelihood function of logistic regression estimation.
\end{remark}

\subsection{Asymptotic normality}
\label{sec:asymptotic_normality}

To establish asymptotic normality, we start our investigations
with the Taylor series of the score function
(\ref{e:LR_score_n}).

For every component of the score function, denoted by $\boldsymbol{s}_{n}^{(i)}(\boldsymbol\theta)$ with $1 \leq i \leq m$, the second-order Taylor expansion of $\boldsymbol{s}_{n}^{(i)}(\widehat{\boldsymbol{\theta}_{n}})$ with respect to $\boldsymbol\theta$ at $\boldsymbol{\theta_{0}}$ reads
\begin{equation}
 0 = \boldsymbol{s}_{n}^{(i)}(\widehat{\boldsymbol{\theta}_{n}})
    = \boldsymbol{s}_{n}^{(i)}(\boldsymbol{\theta_0}) 
    + 
    \nabla \boldsymbol{s}_{n}^{(i)}(\boldsymbol{\theta_0})
    ( \widehat{\boldsymbol{\theta}_{n}} - \boldsymbol{\theta_0} )
    + 
    \frac{1}{2} 
    ( \widehat{\boldsymbol{\theta}_{n}} - \boldsymbol{\theta_0} )^{\top}
    \nabla^{2} \boldsymbol{s}_{n}^{(i)}(\boldsymbol{\theta^{\prime, i}})
    ( \widehat{\boldsymbol{\theta}_{n}} - \boldsymbol{\theta_0} ),
    \label{e:Taylor_series_error}
\end{equation}
where $\nabla \boldsymbol{s}_{n}^{(i)}(\boldsymbol{\theta})$ is
the $[1\times m]$-vector containing the first-order
partial derivatives of $\boldsymbol{s}_{n}^{(i)}(\boldsymbol{\theta})$ with respect to $\boldsymbol\theta$ and 
$\nabla^{2} \boldsymbol{s}_{n}^{(i)}(\boldsymbol{\theta})$ is
the $[m\times m]$-matrix containing the 
second-order partial derivatives of
$\boldsymbol{s}_{n}^{(i)}(\boldsymbol{\theta})$. Moreover,
$\boldsymbol{\theta^{\prime,i}}$ is a convex combination of $\widehat{\boldsymbol{\theta}_{n}}$ and
$\boldsymbol{\theta_0}$ which, by condition
($\mathcal{C}.6$), lies in $\boldsymbol\Theta$ as well. 

Write $\nabla\boldsymbol s_n(\boldsymbol{\theta_{0}})$ for the matrix whose $i$-th row is $\boldsymbol{s}_{n}^{(i)}(\boldsymbol{\theta_0})$ with
$1 \leq i \leq m$ and assume that its inverse is
well-defined. Heuristically, 
the idea is then to ignore the error term
which is the quadratic form in
(\ref{e:Taylor_series_error}) and rearrange the
remaining terms to obtain
\begin{equation}
   n^{1/2}(\widehat{\boldsymbol\theta_{n}} - \boldsymbol{\theta_0})
   \approx \left[-\frac{\nabla \boldsymbol{s}_{n}(\boldsymbol{\theta_0})}{n}\right]^{-1}
   \frac{\boldsymbol{s}_{n}(\boldsymbol{\theta_0})}{n^{1/2}}.
    \label{e:Taylor_series_round}
\end{equation} 
In the next two lemmas, we first study the asymptotic
behaviours of the terms 
$- \nabla \boldsymbol{s}_{n}(\boldsymbol{\theta_0})/n$ and
$\boldsymbol{s}_{n}(\boldsymbol{\theta_0})/n^{1/2}$, respectively.

\begin{lemma}
\label{lemma:4_3}
Assume that the conditions
($\mathcal{C}.1$)--($\mathcal{C}.2$) and
($\mathcal{C}.5$) hold. Define
$\boldsymbol{s}_{n}(\boldsymbol\theta)$ 
$=$ $ \boldsymbol{s}(X_n, D_n;$ $ \boldsymbol\theta)$ by
(\ref{e:LR_score_n}) with $\boldsymbol\theta \in
\boldsymbol\Theta$. Then, as 
$n\rightarrow \infty$, 
$-\nabla \boldsymbol{s}_{n}(\boldsymbol{\theta_0})/n$ converges
$P_{\boldsymbol{\theta_0}}$-almost surely to the
matrix $\bm{U}$ given by
\begin{equation*}
\bm{U}=\left[
   \int_{W\times T}
      \frac{\lambda(u;\boldsymbol{\theta_0}) \rho(u)}
           {\lambda(u;\boldsymbol{\theta_0}) + \rho(u)} 
     z^{(k)}(u) z^{(l)}(u) du
\right]_{k,l=1}^{m}.
\end{equation*}
\end{lemma}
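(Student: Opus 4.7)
The plan is to reduce the problem to two applications of Kolmogorov's strong law of large numbers. First, I would compute $\nabla\boldsymbol{s}_n(\boldsymbol\theta)$ explicitly. Because $\nabla\lambda(u;\boldsymbol\theta) = \lambda(u;\boldsymbol\theta)\bm{z}(u)$, the score function simplifies to
\begin{equation*}
\boldsymbol{s}_n(\boldsymbol\theta) = \sum_{x\in X_n}\frac{\rho(x)}{\lambda(x;\boldsymbol\theta)+\rho(x)}\bm{z}(x) - \sum_{x\in D_n}\frac{\lambda(x;\boldsymbol\theta)}{\lambda(x;\boldsymbol\theta)+\rho(x)}\bm{z}(x).
\end{equation*}
Differentiating the rational factors with respect to $\theta^{(l)}$ and using $\partial_l\lambda = \lambda z^{(l)}$, a short calculation gives that the $(k,l)$-entry of $-\nabla\boldsymbol{s}_n(\boldsymbol\theta)$ is the sum of $\lambda(x;\boldsymbol\theta)\rho(x)/(\lambda(x;\boldsymbol\theta)+\rho(x))^2\,z^{(k)}(x)z^{(l)}(x)$ over $x \in X_n \cup D_n$, matching the Hessian formula already displayed in the proof of Theorem~\ref{theorem:consistency}.

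Next, I would divide by $n$ and split the sum according to $X_n = \bigcup_{i=1}^n Y_i$ and $D_n = \bigcup_{i=1}^n E_i$, writing each piece as the sample mean of i.i.d.\ summands (using condition ($\mathcal{C}.1$)). The key verification is that each summand has finite $P_{\boldsymbol{\theta_0}}$-expectation. Here I would use the elementary bound $\lambda\rho/(\lambda+\rho)^2 \le 1/4$ (AM-GM), so that together with condition ($\mathcal{C}.5$) the integrands in the Campbell--Mecke calculations below are dominated by a constant multiple of $\lambda(\cdot;\boldsymbol{\theta_0})$ and $\rho$ respectively, both absolutely integrable by ($\mathcal{C}.2$). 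Note that ($\mathcal{C}.4$) is not actually needed for this lemma.

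Applying the Campbell--Mecke theorem to $Y_i$ and $E_i$ separately, the $(k,l)$-entries of the two scaled sums converge $P_{\boldsymbol{\theta_0}}$-almost surely to
\begin{equation*}
\int_{W\times T}\frac{\lambda(u;\boldsymbol{\theta_0})^{2}\rho(u)}{(\lambda(u;\boldsymbol{\theta_0})+\rho(u))^{2}}z^{(k)}(u)z^{(l)}(u)\,du \quad \text{and} \quad \int_{W\times T}\frac{\lambda(u;\boldsymbol{\theta_0})\rho(u)^{2}}{(\lambda(u;\boldsymbol{\theta_0})+\rho(u))^{2}}z^{(k)}(u)z^{(l)}(u)\,du,
\end{equation*}
respectively. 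Summing and using $\lambda^{2}\rho+\lambda\rho^{2}=\lambda\rho(\lambda+\rho)$ yields exactly the $(k,l)$-entry of $\bm{U}$. The conclusion then follows from combining the two almost-sure limits on the intersection of two probability-one events.

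There is no real obstacle here; the lemma is essentially bookkeeping. The only thing to watch is the differentiation to ensure that the signs and coefficients line up into the symmetric form $\lambda\rho/(\lambda+\rho)^{2}$ shared by both sums, which is what allows the final algebraic cancellation into $\lambda\rho/(\lambda+\rho)$.
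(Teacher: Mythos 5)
Your proposal is correct and follows essentially the same route as the paper: explicit differentiation of the score to obtain the $(k,l)$-entry $\lambda\rho/(\lambda+\rho)^{2}\,z^{(k)}z^{(l)}$, the Campbell--Mecke theorem to compute finite means (with boundedness of $\bm{z}$ from ($\mathcal{C}.5$) and integrability from ($\mathcal{C}.2$)), and Kolmogorov's strong law of large numbers. The only cosmetic difference is that you apply the strong law separately to the $Y_i$- and $E_i$-averages and then add the two limits $\int \lambda^{2}\rho/(\lambda+\rho)^{2}z^{(k)}z^{(l)}\,du$ and $\int \lambda\rho^{2}/(\lambda+\rho)^{2}z^{(k)}z^{(l)}\,du$, whereas the paper treats each pair $Y_i\cup E_i$ as a single i.i.d.\ summand; both yield $\bm{U}$, and your observation that ($\mathcal{C}.4$) is not needed matches the paper.
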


\begin{proof}
Under conditions ($\mathcal{C}.1$)--($\mathcal{C}.2$) and recalling the score function (\ref{e:LR_score_n}), 
\begin{equation*}
-\frac{ \nabla \boldsymbol{s}_{n}(\boldsymbol{\theta_0}) }{ n } = 
   \left[
       \sum_{x\in X_{n}\cup D_{n}} 
           \frac{\lambda(x;\boldsymbol{\theta_0})\rho(x)}
         {n \left(
           \lambda(x;\boldsymbol{\theta_0}) + \rho(x)\right)^{2}}
      z^{(k)}(x) z^{(l)}(x)
   \right]_{k,l=1}^{m}.
\end{equation*}
We shall prove component-wise strong convergence. 

Consider the $(k,l)$-th entry of the matrix above which, recalling condition ($\mathcal{C}.1$), is given by
\begin{equation*}
   \frac{1}{n} \sum_{i=1}^n \left\{\sum_{x\in Y_{i}\cup E_{i}} 
           \frac{\lambda(x;\boldsymbol{\theta_0})\rho(x)}
         { \left(
           \lambda(x;\boldsymbol{\theta_0}) + \rho(x)\right)^{2}}
      z^{(k)}(x) z^{(l)}(x) \right\} .
\end{equation*}
The sum is the average of independent and identically distributed real-valued random variables. By the Campbell--Mecke theorem,
\begin{equation*}
\mathbb{E}_{\boldsymbol{\theta_0}} \left\{
   \sum_{x\in Y_{i}\cup E_{i}} 
       \frac{\lambda(x;\boldsymbol{\theta_0}) \rho(x)}
         {\left(\lambda(x;\boldsymbol{\theta_0}) + \rho(x) \right)^{2}}
         z^{(k)}(x) z^{(l)}(x)
\right\}
   =
 \int_{W\times T}
 \frac{\lambda(u;\boldsymbol{\theta_{0}})\rho(u)}
 {\lambda(u;\boldsymbol{\theta_{0}})+\rho(u)}
   z^{(k)}(u)z^{(l)}(u) du.
\end{equation*}
By condition ($\mathcal{C}.5$), the covariate terms in the integrand above are all bounded. Furthermore, by condition ($\mathcal{C}.2$),
\begin{equation*}
0 < \frac{\lambda(u;\boldsymbol{\theta_0}) \rho(u)}
  {\lambda(u;\boldsymbol{\theta_0}) + \rho(u)} < \rho(u)
\end{equation*}
and $\rho(u)$ is absolutely integrable.  Kolmogorov's strong
law of large numbers implies the claimed 
$P_{\boldsymbol{\theta_0}}$-almost sure convergence.
\end{proof}

\begin{lemma}
\label{lemma:4_4}
Assume that the conditions
($\mathcal{C}.1$)--($\mathcal{C}.3$) and
($\mathcal{C}.5$) hold. Define
$\boldsymbol{s}_{n}(\boldsymbol\theta) = \boldsymbol{s}(X_n, D_n;$
$\boldsymbol\theta)$ by (\ref{e:LR_score_n}) with
$\boldsymbol\theta \in \boldsymbol\Theta$. Then, as
$n\rightarrow \infty$,
$\boldsymbol{s}_{n}(\boldsymbol{\theta_0})/n^{1/2}$ converges 
under $P_{\boldsymbol{\theta_0}}$ in distribution to an
$m$-dimensional normally distributed random vector
with mean zero and covariance matrix
\begin{equation*}
\begin{split}
    \bm{V} &= \left[
      \int_{W\times T}
          \frac{\lambda(u;\boldsymbol{\theta_0}) \rho(u)}
               {\lambda(u;\boldsymbol{\theta_0}) + \rho(u)}
      z^{(k)}(u) z^{(l)}(u) du
\right]_{k,l=1}^{m}\\
&+
\left[
     \int_{W\times T}\int_{W\times T}
     \frac{
       \lambda(u;\boldsymbol{\theta_0})
       \lambda(v;\boldsymbol{\theta_0})
       \rho(u)\rho(v) z^{(k)}(u) z^{(l)}(v) }
     {(\lambda(u;\boldsymbol{\theta_0}) + \rho(u))
     (\lambda(v;\boldsymbol{\theta_0}) + \rho(v))} \left(g(u,v)-1\right)
     du dv \right]_{k,l=1}^{m}.
\end{split}
\end{equation*}
\end{lemma}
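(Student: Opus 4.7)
The plan is to recognise $\boldsymbol{s}_n(\boldsymbol{\theta_0})/n^{1/2}$ as a normalised sum of i.i.d.\ $\mathbb{R}^m$-valued random vectors and then invoke the classical multivariate Lindeberg--L\'evy central limit theorem. Using $\nabla\lambda(u;\boldsymbol\theta)=\lambda(u;\boldsymbol\theta)\bm{z}(u)$ in (\ref{e:LR_score_n}) together with the partition from condition $(\mathcal{C}.1)$, I would first write
\begin{equation*}
\frac{\boldsymbol{s}_n(\boldsymbol{\theta_0})}{n^{1/2}}=\frac{1}{\sqrt{n}}\sum_{i=1}^n \bm{S}_i, \qquad \bm{S}_i=\sum_{x\in Y_i}\frac{\rho(x)\,\bm{z}(x)}{\lambda(x;\boldsymbol{\theta_0})+\rho(x)}-\sum_{x\in E_i}\frac{\lambda(x;\boldsymbol{\theta_0})\,\bm{z}(x)}{\lambda(x;\boldsymbol{\theta_0})+\rho(x)}.
\end{equation*}
Since $\{Y_i\}$ and $\{E_i\}$ are two mutually independent sequences of i.i.d.\ point processes, the $\bm{S}_i$ are themselves i.i.d., so it suffices to compute the first two moments of $\bm{S}_1$ and verify that they are finite.

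For the mean, I would apply the Campbell--Mecke theorem (\ref{e:campbell-mecke}) to each of the two sums in $\bm{S}_1$: both yield the common value $\int_{W\times T}\lambda(u;\boldsymbol{\theta_0})\rho(u)\bm{z}(u)/(\lambda(u;\boldsymbol{\theta_0})+\rho(u))\,du$, so $\mathbb{E}_{\boldsymbol{\theta_0}}\bm{S}_1=\bm{0}$; this just restates the unbiasedness of the estimating equation (\ref{e:LR_Score}). For the covariance, the independence of $Y_1$ and $E_1$ lets me compute $\Cov_{\boldsymbol{\theta_0}}(\bm{S}_1)$ as the sum of the two individual covariance matrices via the first-order $U$-statistic formulas of Section~\ref{subsec:momentmeasures}. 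Since $Y_1$ is Poisson its pair correlation function equals one, hence the $Y_1$ sum contributes only the single integral with integrand $\rho(u)^2\,z^{(k)}(u)z^{(l)}(u)\,\lambda(u;\boldsymbol{\theta_0})/(\lambda(u;\boldsymbol{\theta_0})+\rho(u))^{2}$. The $E_1$ sum contributes the analogous single integral with weight $\lambda(u;\boldsymbol{\theta_0})^{2}\rho(u)/(\lambda(u;\boldsymbol{\theta_0})+\rho(u))^{2}$ together with the double integral involving $g(u,v)-1$. The two diagonal terms collapse via the identity $\rho^{2}\lambda+\lambda^{2}\rho=\lambda\rho(\lambda+\rho)$ into the first matrix of $\bm{V}$, while the double integral reproduces the second matrix of $\bm{V}$ verbatim.

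The only point requiring some care is verifying finiteness of these integrals, which is where condition $(\mathcal{C}.3)$ enters. Conditions $(\mathcal{C}.2)$ and $(\mathcal{C}.5)$ give integrability of $\lambda(\cdot;\boldsymbol{\theta_0})$ and $\rho$ and boundedness of $\bm{z}$, while the ratios $\rho/(\lambda+\rho)$ and $\lambda/(\lambda+\rho)$ lie in $[0,1]$; this handles the single integrals. For the double integral, $|g(u,v)-1|\le\sup g+1$ is bounded by $(\mathcal{C}.3)$, and the product measure $\rho\otimes\rho$ is finite on $(W\times T)^2$ since $\rho$ is integrable and $W\times T$ is bounded. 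Consequently $\bm{V}=\Cov_{\boldsymbol{\theta_0}}(\bm{S}_1)$ has finite entries, and the standard multivariate CLT for i.i.d.\ square-integrable vectors yields the claimed weak convergence. I do not foresee a real technical obstacle---the argument is essentially a moment computation followed by the vanilla CLT---and the only slightly delicate step is the algebraic recombination of the two diagonal contributions into the first matrix of $\bm{V}$.
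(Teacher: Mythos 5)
Your proposal is correct and follows essentially the same route as the paper: decompose by the i.i.d.\ replicates from condition ($\mathcal{C}.1$), compute the first two moments via the Campbell--Mecke theorem and the first-order $U$-statistic formulas of Section~\ref{subsec:momentmeasures} (using $g\equiv 1$ for the Poisson $Y_i$ and the bounded $g$ of $E_i$), check finiteness, and apply the multivariate Lindeberg--L\'evy CLT. The only difference is organizational: you merge the $Y_i$- and $E_i$-contributions into a single mean-zero i.i.d.\ summand $\bm{S}_i$ and invoke the CLT once, whereas the paper applies the CLT to the two $U$-statistics separately and then combines the limits via independence and L\'evy's continuity theorem; your variant is slightly more streamlined but mathematically equivalent.
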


\begin{proof}
Under conditions ($\mathcal{C}.1$)--($\mathcal{C}.2$) and recalling the score function (\ref{e:LR_score_n}), 
\begin{equation*}
    \frac{\boldsymbol{s}_{n}(\boldsymbol{\theta_0})}{n^{1/2}} =
    \sum_{x\in X_{n}} 
    \frac{\rho(x)}
    { n^{1/2} \left( 
        \lambda(x;\boldsymbol{\theta_0}) + \rho(x)
     \right)}
    \boldsymbol{z}(x)  
    - \sum_{x\in D_{n}}
    \frac{\lambda(x;\boldsymbol{\theta_0})  }{n^{1/2} \left(
    \lambda(x;\boldsymbol{\theta_0}) + \rho(x)\right)}
    \boldsymbol z(x).
\end{equation*}
It consists of two first-order $U$-statistics defined on $X_n$ and $D_n$. We write it as
\begin{equation*}
\begin{split}
    \frac{\boldsymbol{s}_{n}( \boldsymbol{\theta_0}) }{n^{1/2}}
    &= \frac{\boldsymbol{s}_{n}(X_n; \boldsymbol{\theta_0})}{n^{1/2}} - \frac{\boldsymbol{s}_{n}(D_n; \boldsymbol{\theta_0})}{n^{1/2}}\\
     &= n^{1/2}\left\{\frac{1}{n}\sum_{i=1}^{n}\sum_{x\in Y_{i}}
    \frac{\rho(x)}
    {\lambda(x;\boldsymbol{\theta_0}) + \rho(x)}
    \boldsymbol{z}(x)  
    - \frac{1}{n}\sum_{i=1}^{n}\sum_{x\in E_{i}}
    \frac{\lambda(x;\boldsymbol{\theta_0})  }{
    \lambda(x;\boldsymbol{\theta_0}) + \rho(x)}
    \boldsymbol z(x)\right\}
\end{split}
\end{equation*}
and discuss the two first-order $U$-statistics separately.

Consider $n^{-1/2} \boldsymbol{s}_{n}(X_n; \boldsymbol{\theta_0})$.
Note that the first term between the curly brackets
above is again the average of independent and identically
distributed real-valued random vectors. By the
Campbell--Mecke theorem and recalling Section \ref{subsec:momentmeasures},
\begin{equation*}
    \text{\bf{M}}_{\boldsymbol{\theta_0}} = \mathbb{E}_{\boldsymbol{\theta_0}}
    \left\{
       \sum_{x\in Y_{i}}
       \frac{\rho(x) \boldsymbol z(x) }
              {\lambda(x;\boldsymbol{\theta_0})+\rho(x)}
    \right\}
    = \left[\int_{W\times T}
      \frac{\rho(u) \lambda(u;\boldsymbol{\theta_0}) z^{(l)}(u) }
            {\lambda(u;\boldsymbol{\theta_0}) + \rho(u)} 
           du\right]_{l=1}^{m}
\end{equation*}
and, because $Y_i$ is a Poisson
point process under condition ($\mathcal{C}.2$),
\begin{equation}
    \Cov_{\boldsymbol{\theta_0}}
     \left
        \{\sum_{x\in Y_{i}} 
        \frac{\rho(x) \boldsymbol z(x) }
        {\lambda(x;\boldsymbol{\theta_0}) + \rho(x)}
    \right\}
    =
   \left[
     \int_{W\times T}
     \frac{\lambda(u;\boldsymbol{\theta_0})\rho^{2}(u) z^{(k)}(u) z^{(l)}(u) }
     {(\lambda(u;\boldsymbol{\theta_0}) + \rho(u) )^{2}} 
     du \right]_{k,l=1}^{m}.
     \label{e:V}
\end{equation}
By condition ($\mathcal{C}.5$), the covariate terms
in the integrand above are all bounded. Furthermore,
by condition ($\mathcal{C}.2$),
\begin{equation*}
  0 < \frac{\lambda(u;\boldsymbol{\theta_0}) \rho^{2}(u)}
  {(\lambda(u;\boldsymbol{\theta_0}) + \rho(u))^{2}} < \lambda(u;\boldsymbol{\theta_0}), \quad 
  0 < \frac{\lambda(u;\boldsymbol{\theta_0})\lambda(v;\boldsymbol{\theta_0})\rho(u)\rho(v)}
  {(\lambda(u;\boldsymbol{\theta_0}) + \rho(u))(\lambda(v;\boldsymbol{\theta_0}) + \rho(v))} < \lambda(u;\boldsymbol{\theta_0})\lambda(v;\boldsymbol{\theta_0})
\end{equation*}
and $\lambda(u;\boldsymbol{\theta_0}), \lambda(v;\boldsymbol{\theta_0})$ are absolutely integrable. Then, the multi-variate Lindeberg–L\'evy central limit
theorem implies that
$n^{-1/2} ( \boldsymbol{s}_{n}(X_n; \boldsymbol{\theta_0})
- n \text{\bf{M}}_{\boldsymbol{\theta_0}} ) $ converges under
$P_{\boldsymbol{\theta_0}}$ in distribution to an
$m$-dimensional normally distributed random vector 
with mean zero and covariance matrix given by (\ref{e:V}).

Analogously, by condition ($\mathcal{C}.3$),
$n^{-1/2} ( \boldsymbol{s}_{n}(D_n, \boldsymbol{\theta_0})
- n \text{\bf{M}}_{\boldsymbol{\theta_0}} )$
converges under $P_{\boldsymbol{\theta_0}}$ in
distribution to an $m$-dimensional normally
distributed random vector with mean zero
and covariance matrix given by
\begin{equation*}
\begin{split}
 &\Cov_{\boldsymbol{\theta_0}}\left\{
    \sum_{x\in E_{i}}
     \frac{ \lambda(x;\boldsymbol{\theta_0}) \boldsymbol z(x) }
        {\lambda(x;\boldsymbol{\theta_0}) + \rho(x)}
    \right\}    
    =
    \left[
       \int_{W\times T}
      \frac{ \lambda^{2}(u;\boldsymbol{\theta_0}) \rho(u) z^{(k)}(u) z^{(l)}(u) }
       {(\lambda(u;\boldsymbol{\theta_0}) + \rho(u))^{2}}
      du \right]_{k,l=1}^{m}\\
 &+ \left[
     \int_{W\times T}\int_{W\times T}
     \frac{\lambda(u;\boldsymbol{\theta_0})\lambda(v;\boldsymbol{\theta_0})\rho(u)\rho(v) z^{(k)}(u) z^{(l)}(v) }
     {(\lambda(u;\boldsymbol{\theta_0}) + \rho(u))(\lambda(v;\boldsymbol{\theta_0}) + \rho(v))} (g(u,v)-1)
     dudv \right]_{k,l=1}^{m}.  
\end{split}
\end{equation*}

Applying L\'evy's continuity theorem and using the independence of $X_n$ and $D_n$ under condition ($\mathcal{C}.1$), the 
weak limits for the two first-order $U$-statistics can be combined, which completes the proof.
\end{proof}

\begin{remark}
The pair correlation function $g(u,v)$ of $E_{i}$ can be tuned to control the covariance matrix $\bm{V}$.
When $E_{i}$, and thus $D_n$, is a Poisson point process, $g(u,v)\equiv1$
and the second term in $\bm{V}$
vanishes.  Moreover, regular point processes of $E_{i}$, whose $g(u,v)<1$, may be useful to reduce the variance \citep{Gautier2019DPP}. 
\end{remark}

Recalling (\ref{e:Taylor_series_round}), we are
now in a position to conjecture a central limit
theorem for the logistic regression estimator. For a
formal proof, though, we need to analyze the error
term in the Taylor series
(\ref{e:Taylor_series_error}).
The next theorem is then concerned with asymptotic normality.

\begin{theorem}
\label{lemma:4_5}
Assume that the conditions ($\mathcal{C}.1$)--($\mathcal{C}.8$) hold. Define  $\boldsymbol{s}_{n}(\boldsymbol\theta)
= \boldsymbol{s}(X_n, D_n; \boldsymbol\theta)$ by (\ref{e:LR_score_n}) with
$\boldsymbol\theta \in \boldsymbol\Theta$ and
let $\widehat{\boldsymbol{\theta}_{n}}$ be the logistic
regression estimator for which $\boldsymbol s_n(
\widehat{\boldsymbol{ \theta}_n}) = \bm0$.
If $\widehat{
\boldsymbol{\theta}_{n}}$ is attained, then as $n\to\infty$, 
$n^{1/2}
(\widehat{\boldsymbol{\theta}_{n}}-
\boldsymbol{\theta_0})$ converges under 
$P_{\boldsymbol{\theta_0}}$ in
distribution to an
$m$-dimensional normally distributed random
vector with mean zero and 
covariance matrix $\bm{U}^{-1}\bm{V}(\bm{U}^{-1})^{\top}$, 
where $\bm{U}, \bm{V}$ are as defined in 
Lemma~\ref{lemma:4_3} and Lemma~\ref{lemma:4_4}.
\end{theorem}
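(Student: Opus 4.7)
The plan is to leverage the second-order Taylor expansion (\ref{e:Taylor_series_error}) already set up, combine it with the strong consistency result of Theorem~\ref{theorem:consistency} and the asymptotic results from Lemmas~\ref{lemma:4_3} and \ref{lemma:4_4}, and conclude by a Slutsky-type argument.

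Dividing (\ref{e:Taylor_series_error}) by $n^{1/2}$ and stacking the $m$ component equations gives the matrix identity
\[
  -\frac{\boldsymbol{s}_n(\boldsymbol{\theta_0})}{n^{1/2}} \;=\; \left[\frac{\nabla \boldsymbol{s}_n(\boldsymbol{\theta_0})}{n} + \bm{B}_n\right] n^{1/2}(\widehat{\boldsymbol{\theta}_n} - \boldsymbol{\theta_0}),
\]
where $\bm{B}_n$ is the $m\times m$ matrix whose $i$-th row equals $\tfrac{1}{2}(\widehat{\boldsymbol{\theta}_n} - \boldsymbol{\theta_0})^\top \nabla^2 \boldsymbol{s}_n^{(i)}(\boldsymbol{\theta^{\prime,i}})/n$. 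Provided $\bm{B}_n$ vanishes in probability and the bracketed matrix is eventually invertible, I can solve for $n^{1/2}(\widehat{\boldsymbol{\theta}_n}-\boldsymbol{\theta_0})$ and pass to the limit via Slutsky.

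The main obstacle is showing $\bm{B}_n \to \bm{0}$ in $P_{\boldsymbol{\theta_0}}$-probability, because the mean-value point $\boldsymbol{\theta^{\prime,i}}$ itself depends on the data through $\widehat{\boldsymbol{\theta}_n}$. I would handle this by establishing a deterministic uniform bound on $\nabla^2 \boldsymbol{s}_n^{(i)}(\boldsymbol\theta)/n$. A direct differentiation of the score shows that the $(j,k)$-entry of $\nabla^2 \boldsymbol{s}_n^{(i)}(\boldsymbol\theta)$ equals
\[
   -\sum_{x\in X_n\cup D_n} \frac{\lambda(x;\boldsymbol\theta)\,\rho(x)\,(\rho(x)-\lambda(x;\boldsymbol\theta))\,z^{(i)}(x)\,z^{(j)}(x)\,z^{(k)}(x)}{(\lambda(x;\boldsymbol\theta)+\rho(x))^3}.
\]
By AM-GM, $(\lambda+\rho)^2 \geq 4\lambda\rho$, and $|\rho-\lambda|\leq \lambda+\rho$, so the scalar coefficient $\lambda\rho|\rho-\lambda|/(\lambda+\rho)^3$ is bounded by $1/4$ \emph{regardless of $\boldsymbol\theta$}; combined with the covariate bound from $(\mathcal{C}.5)$, this yields $\|\nabla^2 \boldsymbol{s}_n^{(i)}(\boldsymbol\theta)/n\| \leq C\,|X_n\cup D_n|/n$ uniformly in $\boldsymbol\theta \in \boldsymbol\Theta$ and in $i$, with $|X_n\cup D_n|/n$ converging $P_{\boldsymbol{\theta_0}}$-almost surely to $\int_{W\times T}(\lambda(u;\boldsymbol{\theta_0}) + \rho(u))\,du < \infty$ by Kolmogorov's strong law. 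Combining this uniform bound with $\widehat{\boldsymbol{\theta}_n} - \boldsymbol{\theta_0} \to \bm{0}$ from Theorem~\ref{theorem:consistency} then delivers $\bm{B}_n \to \bm{0}$ $P_{\boldsymbol{\theta_0}}$-almost surely.

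Finally, Lemma~\ref{lemma:4_3} together with condition $(\mathcal{C}.8)$ gives $-\nabla \boldsymbol{s}_n(\boldsymbol{\theta_0})/n \to \bm{U}$ almost surely with $\bm{U}$ positive definite and hence invertible, so the bracketed matrix above is eventually invertible and its inverse converges in probability to $-\bm{U}^{-1}$. By Lemma~\ref{lemma:4_4}, $\boldsymbol{s}_n(\boldsymbol{\theta_0})/n^{1/2}$ converges in distribution under $P_{\boldsymbol{\theta_0}}$ to an $m$-dimensional normal random vector with mean $\bm{0}$ and covariance $\bm{V}$. Applying Slutsky's theorem to the rearranged identity
\[
  n^{1/2}(\widehat{\boldsymbol{\theta}_n} - \boldsymbol{\theta_0}) \;=\; \left[-\frac{\nabla \boldsymbol{s}_n(\boldsymbol{\theta_0})}{n} - \bm{B}_n\right]^{-1} \frac{\boldsymbol{s}_n(\boldsymbol{\theta_0})}{n^{1/2}}
\]
then produces the claimed asymptotic normality with limiting covariance $\bm{U}^{-1}\bm{V}(\bm{U}^{-1})^\top$.
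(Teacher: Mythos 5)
Your proposal is correct and follows essentially the same route as the paper: the Taylor expansion (\ref{e:Taylor_series_error}), a bound on the quadratic remainder that is uniform in the mean-value point, strong consistency to kill that remainder, Lemma~\ref{lemma:4_3} with ($\mathcal{C}.8$) for invertibility of the limiting matrix, and Lemma~\ref{lemma:4_4} with Slutsky's theorem to conclude. The only (harmless) variation is in controlling the Hessian term: you bound the scalar coefficient $\lambda\rho|\rho-\lambda|/(\lambda+\rho)^3$ by $1/4$ uniformly in $\boldsymbol\theta$ and apply the strong law to $|X_n\cup D_n|/n$, whereas the paper bounds it by the covariate product and invokes the Campbell--Mecke theorem together with Markov's inequality to get an $O_P(1)$ bound; both yield the required negligibility of the error term.
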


\begin{proof}
Consider the error term in the Taylor series
(\ref{e:Taylor_series_error}) which,
under conditions ($\mathcal{C}.1$) and
($\mathcal{C}.2$) and recalling the score function (\ref{e:LR_score_n}), reads
\begin{equation}
\begin{split}
    & \frac{1}{2}( \widehat{\boldsymbol{\theta}_{n}} - \boldsymbol{\theta_0} )^{\top}
    \nabla^{2} \boldsymbol{s}_{n}^{(i)}(\boldsymbol{\theta^{\prime,i}})
    ( \widehat{\boldsymbol{\theta}_{n}} - \boldsymbol{\theta_0} )
    = 
    \frac{1}{2}\sum_{k=1}^m \sum_{l=1}^m 
    (\widehat{\boldsymbol{\theta}_{n}} - \boldsymbol{\theta_0}) ^{(k)} 
    (\widehat{\boldsymbol{\theta}_{n}} - \boldsymbol{\theta_0}) ^{(l)} \times \\
    & \times
    \sum_{x\in X_n\cup D_n}
    \frac{
    \left( 
     \lambda(x; \boldsymbol{\theta^{\prime,i}}) - \rho(x) \right)
    \lambda(x; \boldsymbol{\theta^{\prime,i}}) \rho(x) z^{(i)}(x) z^{(k)}(x) z^{(l)}(x)
    }{
     \left( \lambda(x; \boldsymbol{\theta^{\prime,i}}) + \rho(x) \right)^3
    }
\end{split}
\label{e:lemma_4_5_3}
\end{equation}
 for all choices of $\boldsymbol{\theta^{\prime,i}}$ among the convex combinations of $\widehat{\boldsymbol{\theta}_{n}}$ and
$\boldsymbol{\theta_0}$.

Firstly, note that 
\begin{equation}
   \sum_{x\in X_n\cup D_n}
\frac{
\left( \lambda(x; \boldsymbol{\theta^{\prime,i}}) - \rho(x) \right)
\lambda(x; \boldsymbol{\theta^{\prime,i}}) \rho(x) z^{(i)}(x) z^{(k)}(x) z^{(l)}(x)
}{
 n \left( \lambda(x; \boldsymbol{\theta^{\prime,i}}) + \rho(x) \right)^3
}
\label{e:lemma_4_5_6}
\end{equation}
is bounded in absolute value by
\begin{equation*}
  G^{(n)}_{i,k,l} =  \sum_{x\in X_{n}\cup D_{n}}\frac{\left|z^{(i)}(x)z^{(k)}(x)z^{(l)}(x)\right|}{n}.
\end{equation*}
This bound does not depend on $\boldsymbol{\theta^{\prime,i}}$ and thus does not depend on $\widehat{\boldsymbol{\theta}_{n}}$. 
By the Campbell--Mecke theorem,
\begin{equation*}
    \mathbb{E}_{\boldsymbol{\theta_0}}\left\{G^{(n)}_{i,k,l}\right\}=\int_{W\times T}(\lambda(u;\boldsymbol{\theta_0})+\rho(u))\left|z^{(i)}(u)z^{(k)}(u)z^{(l)}(u)\right|du.
\end{equation*}
Under conditions
($\mathcal{C.}2$) and ($\mathcal{C.}5$), $\mathbb{E}_{\boldsymbol{\theta_0}}\{G^{(n)}_{i,k,l}\}$ is a non-negative constant. If it is zero, 
the product of the covariate terms inside has to be zero almost everywhere on $W\times T$, which contradicts condition ($\mathcal{C.}8$). Thus, $\mathbb{E}_{\boldsymbol{\theta_0}}\{G^{(n)}_{i,k,l}\}$ is strictly positive. Then, by Markov's inequality, for any
$\delta>0$, there always exists a finite $H_{i,k,l}(\delta)=\mathbb{E}_{\boldsymbol{\theta_0}}\{G^{(n)}_{i,k,l}\}/\delta$ such that
\begin{equation*}
    \mathbb{P_{\boldsymbol{\theta_0}}}\left\{\left|G^{(n)}_{i,k,l}\right|\geq H_{i,k,l}(\delta)\right\}
    \leq 
    \delta.
\end{equation*}
Since $H_{i,k,l}(\delta)$ depends only on $\delta$ but not on $n$, $G^{(n)}_{i,k,l}$, and thus (\ref{e:lemma_4_5_6}), converges under $P_{\boldsymbol{\theta_0}}$ in probability to $O(1)$. In addition, note that this result applies to every component of (\ref{e:lemma_4_5_6}) with $1 \leq i,k,l \leq m$.

Now, move back to the Taylor expansion (\ref{e:Taylor_series_error}). By Lemma~\ref{lemma:4_3},
$-\nabla \boldsymbol{s}_n(\boldsymbol{\theta_0})/n$ converges
almost surely, and thus in probability, under $P_{\boldsymbol{\theta_0}}$ to 
$\boldsymbol{U}$.
Then, collecting all $i$ with $1 \leq i \leq m$ and recalling the results obtained above, (\ref{e:Taylor_series_error}) can be rewritten as
\begin{equation*}
\left[ \boldsymbol{U} + o_P(1) -
\frac{1}{2} (\widehat{\boldsymbol{ \theta}_n} - 
\boldsymbol{\theta_0})^{\top} O_P(1)  \right] (\widehat{ \boldsymbol{\theta}_n } - \boldsymbol{\theta_0})=\frac{ \boldsymbol{s}_n(\boldsymbol{\theta_0})}{n}.
\end{equation*}
Next, we appeal to strong consistency (cf.,\ Theorem~\ref{theorem:consistency}) to conclude
that $(\widehat{\boldsymbol{ \theta}_n} - 
\boldsymbol{\theta_0})^{\top} O_P(1) $ converges under 
$P_{\boldsymbol{\theta_0}}$ in probability to zero.
Furthermore, by condition $\mathcal{C}.8$, the matrix $\boldsymbol{U}$ is invertible. 
Thus, $\boldsymbol{U} + o_P(1)$ is also invertible with a probability tending to one as $n \to \infty$. Multiplication by this inverse then yields that
\begin{equation*}
 n^{1/2} ( \widehat{ \boldsymbol{\theta}_n } - \boldsymbol{\theta_0} ) = 
 \left[\boldsymbol{U} + o_P(1) \right]^{-1} 
 \frac{  \boldsymbol{s}_n(\boldsymbol{\theta_0})}{n^{1/2} }.
\end{equation*}

The remainder of the proof is then a straightforward application of Slutsky’s theorem. Obviously,
$[\boldsymbol{U} + o_P(1) ]^{-1} $
converges under $P_{\boldsymbol{\theta_0}}$ in probability to $\bm{U}^{-1}$. 
By Lemma \ref{lemma:4_4},
$\boldsymbol{s}_{n}(\boldsymbol{\theta_0})/n^{1/2}$ converges under $P_{\boldsymbol{\theta_0}}$ in distribution to an
$m$-dimensional normally distributed random vector
with mean zero and covariance matrix
$\bm{V}$. Thus, as $n\rightarrow\infty$,
$n^{1/2}(
\widehat{\boldsymbol{\theta}_{n}}
-\boldsymbol{\theta_0})$ converges under
$P_{\boldsymbol{\theta_0}}$ in distribution to an
$m$-dimensional normally distributed random vector
with mean zero and covariance matrix
$\bm{U}^{-1}\bm{V}(\bm{U}^{-1})^{\top}$.
\end{proof}

\begin{remark}
\label{remark:4_6}
In Theorem \ref{theorem:consistency}, we have proved that the logistic regression estimator
$\widehat{\boldsymbol{\theta}_{n}}$ is a strongly
consistent estimator. Here, observing that $\bm{U}$ and $\bm{V}$ are constant matrices, we thus further provide the convergence rate, which is $(\widehat{\boldsymbol{\theta}_{n}} - \boldsymbol{\theta_0})^{(l)}=O_P(n^{-1/2})$ for every $1 \leq l \leq m$.
\end{remark}

\begin{remark}
\label{remark:practical_calculation}
The approximate variance 
$n^{-1} \bm{U}^{-1}\bm{V}(\bm{U}^{-1})^{\top}$ of
${\widehat{\boldsymbol{\theta}_{n}}}$ 
decreases, as $n$ increases. The covariance
matrix depends on the unknown intensity 
function $\lambda(u; \boldsymbol{\theta_0})$.
In practice, a plug-in estimator of $\lambda(u; \boldsymbol{\theta_0})$ would be
used and the dummy point process $D_{n}$ can be employed to
approximate the integrals inside based on the Campbell--Mecke theorem. For instance,
\begin{equation*}
\begin{split}
n \bm V & \approx \left[ 
\sum_{x\in D_n} 
\frac{\lambda(x; \widehat{\boldsymbol\theta_{n}})
z^{(k)}(x) z^{(l)}(x)}
{ \lambda(x; \widehat{\boldsymbol\theta_{n}}) 
  + \rho(x)} 
\right]_{k,l=1}^m 
+ 
\left[
\sJM_{x,y\in D_{n}}
 \frac{\lambda(x;\widehat{\boldsymbol\theta_{n}})
   z^{(k)}(x)}
 {\lambda(x;\widehat{\boldsymbol\theta_{n}})
   +\rho(x)}
 \frac{\lambda(y;\widehat{\boldsymbol\theta_{n}})
 z^{(l)}(y)}
 {\lambda(y;\widehat{\boldsymbol\theta_{n}})
 +\rho(y)} \right]_{k,l=1}^{m}  \\
 &-
\left[
\sum_{x\in D_{n}}
\frac{\lambda(x;\widehat{\boldsymbol\theta_{n}})
z^{(k)}(x)}
{\lambda(x;\widehat{\boldsymbol\theta_{n}})
+\rho(x)}
\sum_{y\in D_{n}}
\frac{\lambda(y;\widehat{\boldsymbol\theta_{n}})
z^{(l)}(y)}
{\lambda(y;\widehat{\boldsymbol\theta_{n}})
+\rho(y)}\right]_{k,l=1}^{m}.
\end{split}
\end{equation*}
Similarly, $n \bm U$ can be approximated by the first
of the three terms in the right hand side of
the equation above. When $E_{i}$, and thus $D_{n}$, is a Poisson
point process, $\bm U = \bm V$ and $\bm{U}^{-1}\bm{V}(\bm{U}^{-1})^{\top}$ reduces to $\bm{U}^{-1}$.
The construction of approximate confidence
intervals for the components of $\bm{\theta_0}$
is now straightforward.
\end{remark}

Note that both strong consistency and asymptotic normality of the logistic regression estimator rely on the attainment of $\widehat{\boldsymbol\theta_{n}}$. Hence, for the sake of completeness, we also formulate the asymptotic existence of the logistic
regression estimator in the next theorem.

\begin{theorem}
\label{theorem:asymptotics}
Assume that the conditions ($\mathcal{C}.1$)--($\mathcal{C}.8$) hold. Define  $\boldsymbol{s}_{n}(\boldsymbol\theta)
= \boldsymbol{s}(X_n, D_n; \boldsymbol\theta)$ by (\ref{e:LR_score_n}) with
$\boldsymbol\theta \in \boldsymbol\Theta$.
Then, for every $n$, an estimator $\widehat{\boldsymbol{\theta}_{n}}$ exists that solves $\boldsymbol{s}_{n}(\widehat{\boldsymbol{\theta}_{n}})=\bm0$ with a probability tending to one as $n\rightarrow\infty$.
\end{theorem}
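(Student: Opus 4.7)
The plan is to combine the convexity of $U_n(\boldsymbol{\theta}) = -l_n(\boldsymbol{\theta})/n$ established in the proof of Theorem~\ref{theorem:consistency} with the pointwise almost-sure convergence of $l_n/n$ from Lemma~\ref{lemma:4_1}, and to localize the argument to a small closed ball around $\boldsymbol{\theta_0}$. Since $\boldsymbol{\Theta}$ is open by condition $(\mathcal{C}.2)$, we can choose $\epsilon>0$ such that the closed ball $\bar{B}_{\epsilon}(\boldsymbol{\theta_0})$ lies entirely in $\boldsymbol{\Theta}$. Solutions of $\boldsymbol{s}_n(\boldsymbol{\theta}) = \boldsymbol{0}$ correspond to interior critical points of the convex function $U_n$, so it will be enough to exhibit an interior minimizer of $U_n$ on $\bar{B}_{\epsilon}(\boldsymbol{\theta_0})$ with $P_{\boldsymbol{\theta_0}}$-probability tending to one.

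First, I would recall from the proof of Theorem~\ref{theorem:consistency} that $U_n(\boldsymbol{\theta})-U_n(\boldsymbol{\theta_0})$ converges $P_{\boldsymbol{\theta_0}}$-almost surely to a non-negative limit $K(\boldsymbol{\theta},\boldsymbol{\theta_0})$ that vanishes only at $\boldsymbol{\theta}=\boldsymbol{\theta_0}$, thanks to identifiability, condition $(\mathcal{C}.7)$. Since $K(\cdot,\boldsymbol{\theta_0})$ is continuous and the sphere $\partial B_{\epsilon}(\boldsymbol{\theta_0})$ is compact, the infimum $\eta := \inf_{\boldsymbol{\theta}\in\partial B_{\epsilon}(\boldsymbol{\theta_0})}K(\boldsymbol{\theta},\boldsymbol{\theta_0})$ is attained and strictly positive.

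Next, because each $U_n-U_n(\boldsymbol{\theta_0})$ is convex on the open convex set $\boldsymbol{\Theta}$ by conditions $(\mathcal{C}.2)$ and $(\mathcal{C}.6)$, and converges pointwise $P_{\boldsymbol{\theta_0}}$-almost surely to the finite convex function $K(\cdot,\boldsymbol{\theta_0})$, a classical result in convex analysis (for instance, Rockafellar's \emph{Convex Analysis}, Theorem~10.8) upgrades this to uniform convergence on every compact subset of $\boldsymbol{\Theta}$, and in particular on $\bar{B}_{\epsilon}(\boldsymbol{\theta_0})$. Consequently, with $P_{\boldsymbol{\theta_0}}$-probability tending to one, $U_n(\boldsymbol{\theta})-U_n(\boldsymbol{\theta_0}) \geq \eta/2 > 0$ for every $\boldsymbol{\theta}\in\partial B_{\epsilon}(\boldsymbol{\theta_0})$.

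On the event just described, continuity of $U_n$ on the compact set $\bar{B}_{\epsilon}(\boldsymbol{\theta_0})$ together with the strict inequality $U_n(\boldsymbol{\theta_0}) < \inf_{\boldsymbol{\theta}\in\partial B_{\epsilon}(\boldsymbol{\theta_0})}U_n(\boldsymbol{\theta})$ forces $U_n$ to attain its minimum over $\bar{B}_{\epsilon}(\boldsymbol{\theta_0})$ at some interior point $\widehat{\boldsymbol{\theta}_{n}}$. Differentiability of $U_n$ then yields $\nabla U_n(\widehat{\boldsymbol{\theta}_{n}}) = \boldsymbol{0}$, or equivalently $\boldsymbol{s}_n(\widehat{\boldsymbol{\theta}_{n}}) = \boldsymbol{0}$, which is the required conclusion. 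The main obstacle I anticipate is the upgrade from pointwise to uniform convergence of $U_n-U_n(\boldsymbol{\theta_0})$ on $\bar{B}_{\epsilon}(\boldsymbol{\theta_0})$; this is precisely where the convexity established in the proof of Theorem~\ref{theorem:consistency} and the openness of $\boldsymbol{\Theta}$ from condition $(\mathcal{C}.2)$ enter the argument in a crucial way.
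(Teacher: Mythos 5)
Your proposal is correct, but it follows a genuinely different route from the paper. The paper disposes of this theorem by appealing to \citet[Corollary~2.6]{Sorensen1999solutionExistence}: it notes that $\boldsymbol{s}_{n}(\boldsymbol\theta)$ is continuously differentiable by ($\mathcal{C}.2$) and defers the verification of S{\o}rensen's Condition~2.5 --- in essence, uniform convergence in probability of $-\nabla\boldsymbol{s}_{n}/n$ to the matrix $\bm{U}$ over shrinking neighbourhoods $C_{\beta}^{(n)}$ of $\boldsymbol{\theta_0}$, combined with invertibility of $\bm{U}$ from ($\mathcal{C}.8$) --- to the more general Lemma~\ref{lemma:4_6}. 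You instead exploit the special structure that $\boldsymbol{s}_{n}$ is the gradient of the concave logistic log-likelihood: the everywhere-convexity of $U_n=-l_n/n$ from the proof of Theorem~\ref{theorem:consistency}, the pointwise strong law of Lemma~\ref{lemma:4_1}, identifiability ($\mathcal{C}.7$) to make the limit $K(\cdot,\boldsymbol{\theta_0})$ strictly positive on a small sphere around $\boldsymbol{\theta_0}$, and the convex-analysis fact that pointwise convergence of finite convex functions on an open convex set is automatically uniform on compacts; this localizes a minimizer, hence an interior root of $\boldsymbol{s}_{n}$, inside an arbitrarily small ball with probability tending to one. Your argument is more elementary, does not need ($\mathcal{C}.3$) or ($\mathcal{C}.8$), and yields consistency (indeed localization at rate $\epsilon$ for any fixed $\epsilon$) of the constructed root as a by-product; the paper's heavier machinery is chosen because it carries over verbatim to the general unbiased estimating equations of Section~\ref{sec:extensions}, where the likelihood interpretation --- and with it your convexity --- is lost. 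One small technical point you should make explicit: the almost-sure convergence in Lemma~\ref{lemma:4_1} holds for each fixed $\boldsymbol\theta$ with a $\boldsymbol\theta$-dependent null set, so before invoking the convexity theorem you should restrict to a countable dense subset of $\boldsymbol\Theta$, collect the null sets, and use convexity to extend the convergence to all of $\boldsymbol\Theta$ (uniformly on compacts) on a single event of full probability; this is a standard fix and does not affect your conclusion.
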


\begin{proof}
The proof follows from an appeal to \citet[Corollary~2.6]{Sorensen1999solutionExistence}. By condition ($\mathcal{C}.2$), $\boldsymbol{s}_{n}(\boldsymbol{\theta})$ is continuously differentiable with respect to $\boldsymbol\theta$ for all $\boldsymbol\theta \in \boldsymbol\Theta$. Thus, we only need to verify \citet[Condition~2.5]{Sorensen1999solutionExistence}. For elegance of writing, we omit the proof here but provide a proof in more general cases in Lemma~\ref{lemma:4_6}. 
\end{proof}

\section{Generalized infill asymptotic results}
\label{sec:extensions}

In this section, we discuss the extensions of our infill asymptotic results to general point processes and to other unbiased estimating equations that are based on the Campbell--Mecke theorem.

\subsection{Extensions to general point processes}
\label{subsec:generalpointprocess}

By releasing the Poisson assumption of $X_{n}$ and modifying corresponding conditions, the infill asymptotic results for logistic regression estimators obtained in Section \ref{sec:asymptotics} can be extended to other point processes, although a log-linear form (\ref{e:intensity}) for the intensity function may sometimes not be that natural. 

In the remainder of this section, we first demonstrate the extended infill asymptotics for general spatio-temporal point processes. Afterwards, we discuss the applicability for some specific families of point processes.

To extend the asymptotic results to general point processes, we shall need the following modified conditions:

\vspace{-\topsep}
\begin{description}[itemsep=0.1pt]
\item[$(\mathcal{C}.9)$] $Y_i$ is a point 
process with intensity function $\lambda(u; \boldsymbol\theta)$ given by (\ref{e:intensity}), where $b>0$ is an absolutely integrable function, $\bm{z}$ is a measurable vector of
covariates and the parameter vector $\boldsymbol \theta$ lies in an open set $\boldsymbol\Theta \subset \mathbb{R}^m$; $E_i$ has absolutely integrable intensity function $\rho(u) > 0$. 

\item[$(\mathcal{C}.10)$] Both $E_{i}$ and $Y_{i}$ have bounded pair correlation functions, $g$ and $h$,
that is, $\sup_{(u,v)\in (W\times T)^{2}}$ $g(u,v)$ $<\infty$ and $\sup_{(u,v)\in (W\times T)^{2}}h(u,v)<\infty$ .  
\end{description}
\vspace{-\topsep}

The theorems concerned with strong consistency, asymptotic normality and existence of the logistic regression estimator are then as follows.

\begin{theorem}
\label{theorem:consistency_general}
Assume that the conditions
($\mathcal{C}.1$),
($\mathcal{C}.4$)--($\mathcal{C}.7$) and ($\mathcal{C}.9$) hold. Define 
$l_n(\boldsymbol\theta) =
 l(X_n, D_n; \boldsymbol \theta)$ by
(\ref{e:LR_likelihood}) with 
$\boldsymbol\theta\in \boldsymbol\Theta$ and 
set 
$\widehat{\boldsymbol{\theta}_{n}}
= \argmax_{\boldsymbol\theta \in \boldsymbol{\Theta}}
l_{n}(\boldsymbol\theta)$. 
If $\widehat{\boldsymbol{\theta}_{n}}$ is attained,
then as $n\rightarrow \infty$,
$\widehat{\boldsymbol{\theta}_{n}}$ converges
$P_{\boldsymbol{\theta_0}}$-almost surely to
$\boldsymbol{\theta_0}$.
\end{theorem}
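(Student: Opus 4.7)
The plan is to follow the proof of Theorem \ref{theorem:consistency} verbatim, since, as noted in the Remark following that theorem, the Poisson property of $Y_i$ is never actually used --- only the log-linear form of $\lambda(\cdot;\boldsymbol\theta)$, the positivity and integrability of $b$ and $\rho$, and the ability to invoke the Campbell--Mecke formula and Kolmogorov's SLLN on i.i.d.\ sums $\sum_{x\in Y_i}(\cdot)$ and $\sum_{x\in E_i}(\cdot)$. First I would verify that Lemma~\ref{lemma:4_1} continues to hold with $(\mathcal{C}.9)$ replacing $(\mathcal{C}.2)$: the lemma's proof uses $(\mathcal{C}.2)$ only to supply log-linearity, positivity of $b$ and absolute integrability of $\lambda(\cdot;\boldsymbol\theta_0)$ and $\rho$, all of which $(\mathcal{C}.9)$ still provides. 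The uniform bounds on the integrands of the two $U$-statistic sums obtained from $(\mathcal{C}.4)$ (together with integrability of $\lambda(\cdot;\boldsymbol\theta_0)$ derived from $(\mathcal{C}.5)$ and $(\mathcal{C}.9)$) give finite means under $P_{\boldsymbol{\theta_0}}$, so Kolmogorov's SLLN applies unchanged and $l_n(\boldsymbol\theta)/n$ converges $P_{\boldsymbol{\theta_0}}$-a.s.\ to the same integral expression.

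Next I would reproduce the three-step scheme of Theorem~\ref{theorem:consistency}. The first step, convexity of $\boldsymbol\theta\mapsto U_n(\boldsymbol\theta)=-l_n(\boldsymbol\theta)/n$ on the convex set $\boldsymbol\Theta$ (condition $(\mathcal{C}.6)$), is a purely algebraic consequence of the log-linear parametrisation (\ref{e:intensity}): the Hessian of $-l_n(\boldsymbol\theta)$ factorises as $\bm{M}^{\top}\bm{M}$ exactly as before, irrespective of the law of $X_n$. For the second step, applying the extended Lemma~\ref{lemma:4_1} twice gives $P_{\boldsymbol{\theta_0}}$-almost sure convergence of $U_n(\boldsymbol\theta) - U_n(\boldsymbol{\theta_0})$ to the same limit $K(\boldsymbol\theta,\boldsymbol{\theta_0})$, and the pointwise analysis of the map $a\mapsto a\log a -(a+b)\log[(a+b)/(1+b)]$ with $a=\lambda(u;\boldsymbol{\theta_0})/\lambda(u;\boldsymbol\theta)>0$ and $b=\rho(u)/\lambda(u;\boldsymbol\theta)>0$ (both well-defined thanks to $b(u)>0$ in $(\mathcal{C}.9)$) still shows that the integrand $k(u;\boldsymbol\theta,\boldsymbol{\theta_0})$ is non-negative and is strictly positive at those $u$ for which $\lambda(u;\boldsymbol\theta)\neq\lambda(u;\boldsymbol{\theta_0})$. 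Identifiability $(\mathcal{C}.7)$ then gives $K(\boldsymbol\theta,\boldsymbol{\theta_0})>0$ for $\boldsymbol\theta\neq\boldsymbol{\theta_0}$. The third step is a direct appeal to \citet[Theorem~3.4.4]{Guyon1995consistency}, which combines convexity, pointwise a.s.\ convergence and the unique-minimiser property to conclude $P_{\boldsymbol{\theta_0}}$-almost sure convergence of $\widehat{\boldsymbol{\theta}_n}$ to $\boldsymbol{\theta_0}$.

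Since each ingredient required by the earlier proof is furnished by $(\mathcal{C}.9)$ alone, there is essentially no new obstacle; in particular, conditions $(\mathcal{C}.3)$, $(\mathcal{C}.8)$ and the strengthened $(\mathcal{C}.10)$ are not used at this stage --- they will enter only in the asymptotic normality extension, where second-order moments of first-order $U$-statistics over $Y_i$ appear and the boundedness of the pair correlation function of $Y_i$ becomes relevant. The one bookkeeping point to watch is that the boundedness arguments from $(\mathcal{C}.4)$ used inside the integrand bounds of Lemma~\ref{lemma:4_1} are stated pointwise on $W\times T$ and so are insensitive to the Poisson assumption; once this is verified, the rest of the proof transcribes without change.
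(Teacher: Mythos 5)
Your proposal is correct and matches the paper's own argument: the paper proves Theorem~\ref{theorem:consistency_general} by simply noting that the proofs of Lemma~\ref{lemma:4_1} and Theorem~\ref{theorem:consistency} carry over verbatim, since (as the remark after Theorem~\ref{theorem:consistency} already observes) the Poisson assumption is never used there, only the log-linear form, the bounds from ($\mathcal{C}.4$)--($\mathcal{C}.5$), Campbell--Mecke, the SLLN and convexity. Your more detailed verification of each ingredient under ($\mathcal{C}.9$) is exactly the intended route.
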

\begin{proof}
The same proofs as for Lemma \ref{lemma:4_1} and Theorem \ref{theorem:consistency} can be applied straightforwardly.
\end{proof}

\begin{theorem}
\label{theorem:asymptotics_general}
Assume that the conditions ($\mathcal{C}.1$), 
($\mathcal{C}.4$)--($\mathcal{C}.10$)  hold. 
Define $\boldsymbol{s}_{n}(\boldsymbol\theta)$ 
$= \boldsymbol{s}(X_n, D_n;$ $\boldsymbol\theta)$ by (\ref{e:LR_score_n}) with
$\boldsymbol\theta \in \boldsymbol\Theta$ and let $\widehat{\boldsymbol{\theta}_{n}}$ be the logistic regression estimator for which $\boldsymbol{s}_{n}(\widehat{\boldsymbol{\theta}_{n}})=\bm0$. If $\widehat{\boldsymbol{\theta}_{n}}$ is attained, then as $n\rightarrow\infty$, $n^{1/2}
(\widehat{\boldsymbol{\theta}_{n}}-
\boldsymbol{\theta_0})$ converges under 
$P_{\boldsymbol{\theta_0}}$ in
distribution to an
$m$-dimensional normally distributed random
vector with mean zero and 
covariance matrix $\bm{U}^{-1}\bm{V}(\bm{U}^{-1})^{\top}$, 
where $\bm{U}$ is as defined in 
Lemma~\ref{lemma:4_3} and $\bm{V}$ is now given by
\begin{equation*}
\begin{split}
    \bm{V} &= \left[
      \int_{W\times T}
          \frac{\lambda(u;\boldsymbol{\theta_0}) \rho(u)}
               {\lambda(u;\boldsymbol{\theta_0}) + \rho(u)}
      z^{(k)}(u) z^{(l)}(u) du
\right]_{k,l=1}^{m}\\
&+
\left[
     \int_{W\times T}\int_{W\times T}
     \frac{
       \lambda(u;\boldsymbol{\theta_0})
       \lambda(v;\boldsymbol{\theta_0})
       \rho(u)\rho(v) z^{(k)}(u) z^{(l)}(v) }
     {(\lambda(u;\boldsymbol{\theta_0}) + \rho(u))
     (\lambda(v;\boldsymbol{\theta_0}) + \rho(v))} \left(g(u,v)+h(u,v)-2\right)
     du dv \right]_{k,l=1}^{m}.
\end{split}
\end{equation*}
\end{theorem}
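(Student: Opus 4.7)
The plan is to mirror the argument of Theorem~\ref{lemma:4_5} while tracking where the Poisson assumption on $Y_{i}$ actually enters. The structure is again the second-order Taylor expansion (\ref{e:Taylor_series_error}), which after collecting the $i$-th components can be rearranged as
\begin{equation*}
\left[ -\frac{\nabla\boldsymbol{s}_{n}(\boldsymbol{\theta_{0}})}{n} + R_{n}(\widehat{\boldsymbol{\theta}_{n}},\boldsymbol{\theta_{0}}) \right]
(\widehat{\boldsymbol{\theta}_{n}} - \boldsymbol{\theta_{0}}) = \frac{\boldsymbol{s}_{n}(\boldsymbol{\theta_{0}})}{n},
\end{equation*}
with $R_{n}$ the remainder built from the third-derivative Hessian expression in (\ref{e:lemma_4_5_3}). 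Multiplying by the inverse of the bracketed matrix and applying Slutsky's theorem will, as in Theorem~\ref{lemma:4_5}, produce the stated normal limit with covariance $\bm{U}^{-1}\bm{V}(\bm{U}^{-1})^{\top}$.

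The first thing I would check is that Lemma~\ref{lemma:4_3} carries over verbatim: its proof uses only the first-order Campbell--Mecke identity applied to $Y_{i}\cup E_{i}$ and Kolmogorov's SLLN, both of which hold under ($\mathcal{C}.1$) and ($\mathcal{C}.9$) without any Poisson assumption, so $-\nabla\boldsymbol{s}_{n}(\boldsymbol{\theta_{0}})/n\to\bm{U}$ almost surely. The genuinely new content is the extension of Lemma~\ref{lemma:4_4}. Split $\boldsymbol{s}_{n}(\boldsymbol{\theta_{0}})/n^{1/2}$ into its $X_{n}$ and $D_{n}$ contributions as in that proof. For $D_{n}$ the argument is unchanged and produces the covariance integrals involving $g(u,v)-1$. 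For $X_{n}$, each $Y_{i}$ contributes an i.i.d.\ random vector whose mean is still $\bm{M}_{\boldsymbol{\theta_{0}}}$ by the Campbell--Mecke theorem, but whose covariance matrix must now be computed from the general formula for first-order $U$-statistics in Section~\ref{subsec:momentmeasures}; the single-integral diagonal piece is exactly the one appearing in (\ref{e:V}), and the second-order piece is
\begin{equation*}
\left[ \int_{W\times T}\int_{W\times T}
\frac{\lambda(u;\boldsymbol{\theta_{0}})\lambda(v;\boldsymbol{\theta_{0}})\rho(u)\rho(v)z^{(k)}(u)z^{(l)}(v)}
{(\lambda(u;\boldsymbol{\theta_{0}})+\rho(u))(\lambda(v;\boldsymbol{\theta_{0}})+\rho(v))}
(h(u,v)-1)\,du\,dv \right]_{k,l=1}^{m},
\end{equation*}
which is finite because $h$ is bounded by ($\mathcal{C}.10$), the covariates are bounded by ($\mathcal{C}.5$), and $\lambda(\cdot;\boldsymbol{\theta_{0}})$ is absolutely integrable by ($\mathcal{C}.5$) and ($\mathcal{C}.9$). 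The multivariate Lindeberg--L\'evy CLT then applies to each sum; independence of $X_{n}$ and $D_{n}$ under ($\mathcal{C}.1$) and L\'evy's continuity theorem let me add the covariances, so the double integrals combine to give the factor $(g(u,v)-1)+(h(u,v)-1)=g(u,v)+h(u,v)-2$ stated in the theorem, while the single integrals collapse to $\int \lambda\rho/(\lambda+\rho)\,z^{(k)}z^{(l)}\,du$.

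For the error-term analysis I would reuse the bound $G^{(n)}_{i,k,l}=\sum_{x\in X_{n}\cup D_{n}}|z^{(i)}z^{(k)}z^{(l)}(x)|/n$ from the proof of Theorem~\ref{lemma:4_5}. Its $P_{\boldsymbol{\theta_{0}}}$-mean is $\int_{W\times T}(\lambda(u;\boldsymbol{\theta_{0}})+\rho(u))|z^{(i)}z^{(k)}z^{(l)}(u)|\,du$ by the first-order Campbell--Mecke identity, which is again independent of the Poisson property of $Y_{i}$ and is finite by ($\mathcal{C}.5$), ($\mathcal{C}.9$) and strictly positive by ($\mathcal{C}.8$); Markov's inequality yields $G^{(n)}_{i,k,l}=O_{P}(1)$. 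Combining this with strong consistency from Theorem~\ref{theorem:consistency_general} shows that $R_{n}(\widehat{\boldsymbol{\theta}_{n}},\boldsymbol{\theta_{0}})=o_{P}(1)$, so the bracketed matrix is eventually invertible and a final application of Slutsky delivers the claim.

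The main obstacle relative to the Poisson case is precisely the covariance computation for the $X_{n}$ sum: one must accommodate the non-trivial second-order term through the pair-correlation function $h$ and verify its finiteness, which is exactly what condition ($\mathcal{C}.10$) buys. Every other step --- the convergence of $-\nabla\boldsymbol{s}_{n}/n$, the Markov bound on the third-derivative remainder, and the Slutsky wrap-up --- depends only on first-order moments of $Y_{i}$ and therefore transfers unchanged from the Poisson proof.
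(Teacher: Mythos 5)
Your proposal is correct and follows essentially the same route as the paper, which simply re-runs Lemma~\ref{lemma:4_3}, Lemma~\ref{lemma:4_4} and Theorem~\ref{lemma:4_5}, observing that the only place the Poisson assumption entered was the covariance of the $X_n$-sum, where the general first-order $U$-statistic formula now contributes the double integral with factor $h(u,v)-1$ whose finiteness is guaranteed by ($\mathcal{C}.10$). Your explicit computation of that covariance, the combination $(g-1)+(h-1)=g+h-2$, and the observation that the remainder bound and Lemma~\ref{lemma:4_3} use only first-order moments match the paper's intended argument exactly.
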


\begin{proof}
Similar proofs as for Lemma~\ref{lemma:4_3}, Lemma~\ref{lemma:4_4} and Theorem~\ref{lemma:4_5} can be used. Note that, by condition ($\mathcal{C}.10$), the pair correlation
function of $Y_i$, denoted by $h(u,v)$, is bounded, which
ensures the entries of $\bm{V}$ to be 
finite. 
\end{proof}

\begin{remark}
The covariance matrix now depends not only on the unknown intensity 
function $\lambda(u; \boldsymbol{\theta_0})$ but also on the pair correlation function $h(u,v)$. Similar to Remark~\ref{remark:practical_calculation}, in practice, a plug-in estimator of $\lambda(u; \boldsymbol{\theta_0})$ would be
used and the point processes $X_{n}$ and $D_{n}$ can be employed to
approximate the integrals.
Compared to Remark~\ref{remark:practical_calculation}, the approximation of $n \bm V$ needs the two extra terms below
\begin{equation*}
\left[
\sJM_{x,y\in X_{n}}
 \frac{\rho(x) z^{(k)}(x)}
 {\lambda(x;\widehat{\boldsymbol{\theta}_{n}})
   +\rho(x)}
 \frac{ \rho(y) z^{(l)}(y)}
 {\lambda(y;\widehat{\boldsymbol{\theta}_{n}})
 +\rho(y)} 
\right]_{k,l=1}^{m}
-
\left[
\sum_{x\in D_{n}}
\frac{ \lambda(x;\widehat{\boldsymbol{\theta}_{n}}) z^{(k)}(x)}
{\lambda(x;\widehat{\boldsymbol{\theta}_{n}})
+\rho(x)}
\sum_{y\in D_{n}}
\frac{ \lambda(y;\widehat{\boldsymbol{\theta}_{n}}) z^{(l)}(y)}
{\lambda(y;\widehat{\boldsymbol{\theta}_{n}})
+\rho(y)}
\right]_{k,l=1}^{m},
\end{equation*}
while the approximation of $n \bm U$ stays the same.
The construction of approximate confidence
intervals for the components of $\bm{\theta_0}$
is then straightforward again.
\end{remark}

\begin{theorem}
\label{theorem:asymptotic_existence_general}
Assume that the conditions ($\mathcal{C}.1$), 
($\mathcal{C}.4$)--($\mathcal{C}.10$)  hold. Define  $\boldsymbol{s}_{n}(\boldsymbol\theta)
= \boldsymbol{s}(X_n, D_n; \boldsymbol\theta)$ by (\ref{e:LR_score_n}) with
$\boldsymbol\theta \in \boldsymbol\Theta$.
Then, for every $n$, an estimator $\widehat{\boldsymbol{\theta}_{n}}$ exists that solves $\boldsymbol{s}_{n}(\widehat{\boldsymbol{\theta}_{n}})=\bm0$ with a probability tending to one as $n\rightarrow\infty$.
\end{theorem}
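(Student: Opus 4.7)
The plan is to verify the hypotheses of \citet[Corollary~2.6]{Sorensen1999solutionExistence} in this more general setting, with normalising sequence $a_n=n$; the conclusion of that corollary is exactly the asymptotic existence statement we require. The overall skeleton is the same as the one sketched for Theorem~\ref{theorem:asymptotics}, but now with the Poisson hypothesis on $Y_i$ replaced by the bounded pair-correlation hypothesis $(\mathcal{C}.10)$.

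First, under $(\mathcal{C}.9)$ the score $\boldsymbol{s}_n(\boldsymbol\theta)$ is a finite sum of smooth functions of $\boldsymbol\theta$, so it is continuously differentiable on $\boldsymbol\Theta$, which takes care of the differentiability prerequisite of \citet[Condition~2.5]{Sorensen1999solutionExistence}. Next, I would argue that $\boldsymbol{s}_n(\boldsymbol\theta_0)/n \to \boldsymbol 0$ in $P_{\boldsymbol{\theta_0}}$-probability: $\mathbb{E}_{\boldsymbol{\theta_0}}[\boldsymbol{s}_n(\boldsymbol\theta_0)] = \boldsymbol 0$ by the Campbell--Mecke theorem (unbiasedness of the estimating equation), while $\mathrm{Var}[\boldsymbol{s}_n(\boldsymbol\theta_0)]$ splits into $n$ independent contributions each of finite variance by the moment calculation behind Lemma~\ref{lemma:4_4}, using $(\mathcal{C}.10)$ to bound the integrals against $h(u,v)$ together with $(\mathcal{C}.4)$--$(\mathcal{C}.5)$ and the integrability in $(\mathcal{C}.9)$ to bound the single-point integrands; hence $\mathrm{Var}[\boldsymbol{s}_n(\boldsymbol\theta_0)/n] = O(1/n)$ and Chebyshev closes the argument. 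Then, observing that the proof of Lemma~\ref{lemma:4_3} relies only on Kolmogorov's strong law for the i.i.d.\ summands indexed by $i$ and not on any distributional property of $Y_i$ beyond its intensity, $-\nabla \boldsymbol{s}_n(\boldsymbol\theta_0)/n$ still converges $P_{\boldsymbol{\theta_0}}$-almost surely to $\boldsymbol U$, which is invertible by $(\mathcal{C}.8)$.

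The remaining item of \citet[Condition~2.5]{Sorensen1999solutionExistence} is a form of local continuity of the normalised Jacobian: for every sequence $\boldsymbol\theta_n \to \boldsymbol\theta_0$ one needs $-\nabla\boldsymbol{s}_n(\boldsymbol\theta_n)/n \xrightarrow{P} \boldsymbol U$. I would establish this by a mean-value argument between $\boldsymbol\theta_n$ and $\boldsymbol\theta_0$: each entry of $\nabla^2 \boldsymbol{s}_n(\boldsymbol\theta)/n$ is, up to a factor uniformly bounded on a closed ball around $\boldsymbol\theta_0$ thanks to $(\mathcal{C}.4)$--$(\mathcal{C}.5)$ and the log-linear form of $\lambda$, dominated in absolute value by a cubic covariate sum of the type $\sum_{x\in X_n \cup D_n} |z^{(i)}(x) z^{(j)}(x) z^{(k)}(x)|/n$ which the Taylor remainder analysis in the proof of Theorem~\ref{lemma:4_5} already shows to be $O_P(1)$ via Markov's inequality and Campbell--Mecke. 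Multiplying by $\|\boldsymbol\theta_n - \boldsymbol\theta_0\| \to 0$ yields the required convergence, and the cited corollary then delivers a root $\widehat{\boldsymbol{\theta}_n}$ of $\boldsymbol{s}_n$ near $\boldsymbol\theta_0$ with probability tending to one. I expect this last step -- producing a genuinely uniform-in-$\boldsymbol\theta$ bound on the third-derivative sum rather than merely a pointwise one -- to be the main obstacle, although it amounts to repeating the tightness arguments on a compact neighbourhood of $\boldsymbol\theta_0$ contained in the open parameter space $\boldsymbol\Theta$.
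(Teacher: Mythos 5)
Your proposal is correct and follows essentially the same route as the paper: an appeal to \citet[Corollary~2.6]{Sorensen1999solutionExistence}, with continuous differentiability from the log-linear form and verification of his Condition~2.5 via convergence of $\boldsymbol{s}_n(\boldsymbol{\theta_0})/n$ to $\boldsymbol 0$, the strong-law limit $\bm{U}$ of $-\nabla\boldsymbol{s}_n(\boldsymbol{\theta_0})/n$ (invertible by ($\mathcal{C}.8$)), and a $\boldsymbol\theta$-uniform third-derivative bound of the type used in the error-term analysis of Theorem~\ref{lemma:4_5}. This is precisely the verification the paper defers to (the logistic specialization of) Lemma~\ref{lemma:4_6}, so no gap remains.
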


\begin{proof}
A similar proof as for Theorem \ref{theorem:asymptotics} can be applied as well.
\end{proof}

In application, for general point processes, the strength of interactions can sometimes enter into the intensity functions. 

For log-Gaussian Cox processes (\citealp{ColesJones1991,MollerSyversveenWaagepetersen}),
their intensity functions are the exponentials of Gaussian random variables driven by associated 
Gaussian random fields with mean function
$\left( \boldsymbol\theta^{\top} \bm{z}\left(u\right) \right) \log b\left(u\right)
$
and covariance function $\sigma^2 c(u,v)$. Here, $c(u,v)$ is
some correlation function 
and $\sigma^2 > 0$. Then, the intensity
functions read
$\lambda(u; \boldsymbol{\theta})\exp{(\sigma^2/2)} $,
which conforms with the log-linear form (\ref{e:intensity}) upon
adding a component with the entry one to $\boldsymbol z$. Should $c(u,v)$ depend on further
parameters, e.g. the decay rate of 
interactions, 
additional estimating equations are required. 

For Markov point processes \citep{Lieshout2000},
in general, their intensity functions are not
known explicitly. However, in the vector function (\ref{e:LR_base}),
one can replace the intensity function $\lambda(u;\boldsymbol\theta)$ by the 
Papangelou conditional intensity function $\lambda(u;\boldsymbol\theta | \bm{x})$
\citep{Baddeley2014LRPP}. The latter characterizes the probability of a new point occurring in an infinitesimal ball centred at a location and time combination $u \in W\times T$ given the current point realisations $\bm{x}$ of $X$. 
Similar infill asymptotics in such cases may be obtained upon adding appropriate conditions on the Papangelou conditional intensity functions.

\subsection{Extensions to other unbiased estimating equations}

The Campbell--Mecke theorem provides the theoretical foundation for a wide range of unbiased estimating equations. Although strong consistency of these estimators may not hold due to the loss of the likelihood interpretation, asymptotic normality and existence hold under appropriate conditions. 

In the remainder of this section, we extend the central limit theorem (cf., Theorems \ref{theorem:asymptotics_general} and \ref{theorem:asymptotic_existence_general}) to more general unbiased estimating equations by
introducing the necessary background and providing
full proofs. 

Recalling (\ref{e:campbell-mecke}), a more general unbiased estimating
function based on the Campbell--Mecke 
theorem can take the form
\begin{equation}
    \boldsymbol{s}(\boldsymbol\theta) = 
    \sum_{x\in X} \boldsymbol{f}(x;\boldsymbol\theta)
    -
    \sum_{x\in D} \boldsymbol{f}(x;\boldsymbol\theta)
    \frac{\lambda(x;\boldsymbol\theta)}{\rho(x)}
    =\boldsymbol{0},
    \label{e:general_score}
\end{equation}
for some test vector function
$\boldsymbol{f}: W \times T \to \mathbb{R}^m$ such that every component of $\boldsymbol{f}$, denoted by $f^{(i)}$ with $1 \leq i \leq m$, has the property that
$f^{(i)} \lambda$ is absolutely integrable. 
Note that, the test function $\boldsymbol{f}$ can contain some non-negative quadrature weight functions (e.g.,\ \citealp{Waagepetersen2008estimatingFunction}; \citealp{ Guan2010weightedEstimatingFunctions}) depending on the first- and second-order characteristics of $X$ to reduce the bias caused by deterministic numerical approximation.
 
To extend the central limit theorem to such unbiased estimating equations, we shall need the following additional conditions:

\vspace{-\topsep}
\begin{description}[itemsep=0.1pt]
\item[$(\mathcal{C}.11)$] The component functions $f^{(i)}(u,\boldsymbol\theta)$ are twice continuously
differentiable with respect to
$\boldsymbol\theta$. Its components and the first- and second-order
partial derivatives, denoted by $\frac{\partial}{\partial\theta^{(l)}}f^{(i)}(u; \boldsymbol{\theta})$ and $\frac{\partial^2}{\partial \theta^{(k)}
\partial \theta^{(l)}}f^{(i)}(u; \boldsymbol{\theta})$, in a neighbourhood of $\boldsymbol{\theta_0}$ are bounded in absolute value by some functions $d^{(i)}(u), d^{(i)}_l(u)$ and $d^{(i)}_{k,l}(u)$, respectively, that are absolutely integrable with respect to
$\lambda(u; \boldsymbol{\theta_0})$.

\item[$(\mathcal{C}.12)$]
The second-order partial derivatives satisfy an adapted H\H{o}lder condition in a neighbourhood of $\boldsymbol{\theta_0}$: there exist some $\alpha \in (0,1]$ and some functions $e^{(i)}_{k,l}(u)$ which are absolutely integrable with respect to
$\lambda(u; \boldsymbol{\theta_0})$ such that 
\begin{equation*}
    \left|
    \frac{\partial^2}{\partial \theta^{(k)} 
    \partial \theta^{(l)}}f^{(i)}(u; \boldsymbol{\theta})-
    \frac{\partial^2}{\partial \theta^{(k)} 
    \partial \theta^{(l)}}f^{(i)}(u; \boldsymbol{\theta_{0}})
    \right|\leq e^{(i)}_{k,l}(u)\|\boldsymbol\theta-\boldsymbol{\theta_{0}}\|^{\alpha}.
\end{equation*}

\item[$(\mathcal{C}.13)$] The matrix $\widetilde{\bm{U}}$, whose ($k,l$)-th entry reads
$\int_{W\times T}
f^{(k)}(u;\boldsymbol{\theta_0})
\lambda(u;\boldsymbol{\theta_0})
z^{(l)}(u)du$ with $\boldsymbol{\theta_{0}} \in \boldsymbol\Theta$ and
$1\leq k,l \leq m$,
is invertible.

\item[$(\mathcal{C}.14)$] The product of any two component functions, denoted by $f^{(k)}(u;\boldsymbol{\theta_0})f^{(l)}(u;\boldsymbol{\theta_0})$ with $1\leq k,l \leq m$, is absolutely integrable with respect to $\lambda(u; \boldsymbol{\theta_0})$.
\end{description}
\vspace{-\topsep}

A few remarks on the conditions above are appropriate. 
Condition ($\mathcal{C}.11$) and ($\mathcal{C}.12$) restrict the smoothness of $\bm{f}$ and its first-order and second-order derivatives. Condition ($\mathcal{C}.13$) provides a statement corresponding to condition ($\mathcal{C}.8$) but in the case of general unbiased estimating equations. Condition ($\mathcal{C}.14$) 
complements necessary technical constraints to ensure that the entries in the variances of these estimators are finite.

To establish the central limit theorem, we start our investigations with the following lemma. 

\begin{lemma}
\label{lemma:4_6}
Assume that the conditions ($\mathcal{C}.1$), ($\mathcal{C}.5$), ($\mathcal{C}.9$) and ($\mathcal{C}.11$) hold.
Define $\boldsymbol{s}_{n}(\boldsymbol\theta)
= \boldsymbol{s}(X_n, D_n; \boldsymbol\theta)$ by (\ref{e:general_score}) with
$\boldsymbol\theta \in \boldsymbol\Theta$.
Then, as $n\to\infty$, for all $\beta > 0$,
\begin{equation*}
    \sup_{\boldsymbol{\theta^{\prime,k}}\in C_{\beta}^{(n)}}\left\|-\frac{1}{n}\nabla \boldsymbol{s}_{n}(\boldsymbol{\theta^{\prime,1}},\dots,\boldsymbol{\theta^{\prime,m}})-\widetilde{\bm{U}}\right\|
\end{equation*}
converges under
$P_{\boldsymbol{\theta_0}}$ in probability to zero,
where 
$
    C_{\beta}^{(n)}=\{ \boldsymbol{\theta}\in \boldsymbol{\Theta} :  \|\boldsymbol{\theta}-\boldsymbol{\theta_{0}}\|\leq \beta/n^{1/2}\}
$
and
\begin{equation*}
        \widetilde{\bm{U}}=\left[\int_{W\times T}
        f^{(k)}(u;\boldsymbol{\theta_0})
        \lambda(u;\boldsymbol{\theta_0})
        z^{(l)}(u)  du
        \right]_{k,l=1}^{m}.
\end{equation*}
\end{lemma}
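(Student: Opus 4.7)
The plan is to establish the uniform convergence by a standard triangle-inequality split: first prove pointwise convergence of $-\nabla\boldsymbol{s}_{n}(\boldsymbol{\theta_{0}})/n$ to $\widetilde{\bm{U}}$, then bound the oscillation of $-\nabla\boldsymbol{s}_{n}/n$ over the shrinking neighbourhood $C_{\beta}^{(n)}$. Since the rows of $\nabla\boldsymbol{s}_{n}(\boldsymbol{\theta^{\prime,1}},\dots,\boldsymbol{\theta^{\prime,m}})$ are indexed by decoupled arguments $\boldsymbol{\theta^{\prime,i}}$, it suffices to prove an entrywise statement: the $(i,l)$-entry of the matrix equals $n^{-1}\partial_{l}s_{n}^{(i)}(\boldsymbol{\theta^{\prime,i}})$ and the supremum can be analysed one row at a time.

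For the pointwise step, I would differentiate the score (\ref{e:general_score}) using $\nabla\lambda(u;\boldsymbol{\theta})=\lambda(u;\boldsymbol{\theta})\bm{z}(u)$ to obtain
\begin{equation*}
-\frac{1}{n}\frac{\partial s_{n}^{(i)}}{\partial \theta^{(l)}}(\boldsymbol{\theta_{0}})
= -\frac{1}{n}\sum_{x\in X_{n}}\frac{\partial f^{(i)}}{\partial\theta^{(l)}}(x;\boldsymbol{\theta_{0}})
+ \frac{1}{n}\sum_{x\in D_{n}}\left[\frac{\partial f^{(i)}}{\partial\theta^{(l)}}(x;\boldsymbol{\theta_{0}}) + f^{(i)}(x;\boldsymbol{\theta_{0}})z^{(l)}(x)\right]\frac{\lambda(x;\boldsymbol{\theta_{0}})}{\rho(x)}.
\end{equation*}
The two terms are averages of i.i.d.\ first-order $U$-statistics over the $Y_{i}$'s and $E_{i}$'s; by Campbell--Mecke the means exist and cancel to leave exactly $\widetilde{U}_{i,l}$, while finiteness of the summands under $(\mathcal{C}.11)$ and $(\mathcal{C}.5)$ lets me invoke Kolmogorov's strong law to conclude the $P_{\boldsymbol{\theta_{0}}}$-almost sure convergence.

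For the oscillation step, I would apply the mean value theorem on the interval between $\boldsymbol{\theta_{0}}$ and $\boldsymbol{\theta^{\prime,i}}$. The $X_{n}$-piece produces $\sum_{k}\tfrac{1}{n}\sum_{x\in X_{n}}|\partial_{k}\partial_{l}f^{(i)}(x;\tilde{\boldsymbol\theta})|\,|\theta^{\prime,i,(k)}-\theta_{0}^{(k)}|$, which by $(\mathcal{C}.11)$ is majorised by $\|\boldsymbol{\theta^{\prime,i}}-\boldsymbol{\theta_{0}}\|\sum_{k}\tfrac{1}{n}\sum_{x\in X_{n}}d^{(i)}_{k,l}(x)$. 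Since $\|\boldsymbol{\theta^{\prime,i}}-\boldsymbol{\theta_{0}}\|\leq\beta/n^{1/2}$ and $\tfrac{1}{n}\sum_{x\in X_{n}}d^{(i)}_{k,l}(x)\to\int d^{(i)}_{k,l}(u)\lambda(u;\boldsymbol{\theta_{0}})du<\infty$ by SLLN and $(\mathcal{C}.11)$, this contribution is $O_{P}(n^{-1/2})$, uniformly in $\boldsymbol{\theta^{\prime,i}}\in C_{\beta}^{(n)}$. The $D_{n}$-piece is analogous but the mean-value bound also picks up terms of the form $|\partial_{l}f^{(i)}|\,|z^{(k)}|\cdot\lambda(\cdot;\boldsymbol{\theta})/\rho$; using $(\mathcal{C}.5)$ these are dominated by $[d^{(i)}_{k,l}+M d^{(i)}_{l}](x)\,\lambda(x;\boldsymbol{\theta})/\rho(x)$ with $M=\sup\|\bm{z}\|$.

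The main technical hurdle is controlling $\lambda(x;\boldsymbol{\theta})/\rho(x)$ uniformly in $\boldsymbol{\theta}\in C_{\beta}^{(n)}$. I would use the log-linear form $\lambda(x;\boldsymbol{\theta})/\lambda(x;\boldsymbol{\theta_{0}})=\exp((\boldsymbol{\theta}-\boldsymbol{\theta_{0}})^{\top}\bm{z}(x))$ together with $(\mathcal{C}.5)$ to obtain $|(\boldsymbol{\theta}-\boldsymbol{\theta_{0}})^{\top}\bm{z}(x)|\leq\beta M/n^{1/2}$, so that for $n$ large enough this ratio is bounded by $2$ uniformly in $x$ and $\boldsymbol{\theta}\in C_{\beta}^{(n)}$. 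After this majorisation the summands no longer depend on $\boldsymbol{\theta}$, the Campbell--Mecke identity turns the $E_{i}$-averages into integrals against $\lambda(u;\boldsymbol{\theta_{0}})du$ which are finite by $(\mathcal{C}.11)$, and Kolmogorov's SLLN delivers the $O_{P}(1)$ bound. Multiplying by the $\beta/n^{1/2}$ factor and combining with the pointwise step via the triangle inequality yields the claimed convergence in probability.
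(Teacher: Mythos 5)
Your proposal is correct and follows essentially the same route as the paper: a triangle-inequality split into convergence of $-\nabla\boldsymbol{s}_{n}(\boldsymbol{\theta_0})/n$ to $\widetilde{\bm U}$ via the Campbell--Mecke theorem and the strong law, plus a uniform oscillation bound over $C_{\beta}^{(n)}$ obtained from a first-order (mean-value/Taylor) expansion controlled by the dominating functions of ($\mathcal{C}.11$) and the bounded covariates of ($\mathcal{C}.5$). The only cosmetic differences are that you bound the intensity ratio multiplicatively, $\lambda(\cdot;\boldsymbol\theta)\le 2\lambda(\cdot;\boldsymbol{\theta_0})$ on $C_{\beta}^{(n)}$, and invoke the SLLN for the residual averages, whereas the paper writes $\lambda(\cdot;\boldsymbol\theta)=\lambda(\cdot;\boldsymbol{\theta_0})+b(\cdot)o(1)$ and uses Markov's inequality; both yield the same $o_P(1)$ control uniformly in $\boldsymbol{\theta^{\prime,k}}$.
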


\begin{proof}
Under conditions ($\mathcal{C}.1$), ($\mathcal{C}.9$) and ($\mathcal{C}.11$) and recalling the unbiased estimating equation (\ref{e:general_score}),
$ - \nabla \boldsymbol{s}_{n}
 (\boldsymbol{\theta^{\prime,1}},
\dots, \boldsymbol{\theta^{\prime,m}})/n$ 
reads 
\begin{equation*}
\begin{split}
    &\frac{1}{n}
    \left[
    \sum_{x\in X_{n}} - \frac{\partial}{
    \partial \theta^{(l)}}  f^{(k)}(x;\boldsymbol{\theta^{\prime,k}})
    +\sum_{x\in D_{n}}
    \frac{\lambda(x; \boldsymbol{\theta^{\prime,k}})}
         {\rho(x)} 
    \left(
      \frac{\partial}{\partial \theta^{(l)}} f^{(k)} (x;\boldsymbol{\theta^{\prime,k}})
    + f^{(k)}(x;\boldsymbol{\theta^{\prime,k}})  z^{(l)}(x)
    \right)
 \right]_{k,l=1}^{m}.
\end{split}
\end{equation*}
Fix $\beta > 0$. We then prove the convergence of the supremum 
component-wisely. 

Firstly, by condition ($\mathcal{C}.11$), one can obtain that, for large enough $n$ and every $1 \leq i,k,l \leq m$,
\begin{equation}
    \left|
    f^{(i)}(u; \boldsymbol{\theta})
    \right| 
    \leq d^{(i)}(u),\quad
    \left|\frac{\partial}{\partial \theta^{(l)}} f^{(i)}(u;\boldsymbol{\theta}) \right|\leq
    d_l^{(i)}(u), \quad
    \left|\frac{\partial^2}{\partial \theta^{(k)} \partial \theta^{(l)}} f^{(i)}(u; \boldsymbol\theta) \right|\leq
    d^{(i)}_{k,l}(u),
    \label{e:bounds}
\end{equation}
for all $u\in W\times T$ and $\boldsymbol\theta \in C_{\beta}^{(n)}$. Note that, taking $n$ large enough, $C_\beta^{(n)}$ lies entirely within $\boldsymbol{\Theta}$, as the parameter space $\boldsymbol\Theta$ is open by condition ($\mathcal{C}.9$). 

Furthermore, under condition ($\mathcal{C}.9$), the first-order Taylor expansion of $\lambda(u; \boldsymbol{\theta})$ with respect to $\boldsymbol\theta$ at $\boldsymbol{\theta_{0}}$ reads
\begin{equation*}
    \lambda(u; \boldsymbol{\theta})
    -\lambda(u; \boldsymbol{\theta_{0}})=
     b\left(u\right) \exp\left( \boldsymbol{\underline{\theta}}^{\top}\bm{z}
     \right)
    \sum_{i=1}^{m}(\boldsymbol{\theta}-\boldsymbol{\theta_0})^{(i)}z^{(i)}(u),
\end{equation*}
where $\boldsymbol{\underline{\theta}}$ is a convex combination of $\boldsymbol{\theta}$ and $\boldsymbol{\theta_0}$. By condition ($\mathcal{C}.5$), the covariate terms in the inner product $\boldsymbol{\underline{\theta}}^{\top}\bm{z}$ and the sum above are all bounded. Thus, recalling the definition of $C_{\beta}^{(n)}$, when $n \to \infty$, one obtains that $\lambda(u; \boldsymbol{\theta})=\lambda(u; \boldsymbol{\theta_{0}})+b(u)o(1)$ for all $u\in W\times T$ and $\boldsymbol\theta \in C_{\beta}^{(n)}$. 

Now, consider $ - \nabla \boldsymbol{s}_{n}(\boldsymbol{\theta^{\prime,1}},\dots,\boldsymbol{\theta^{\prime,m}})/n$. Denote its $(k,l)$-th entry by $[-\nabla \boldsymbol{s}_{n}^{(k)}(\boldsymbol{\theta^{\prime,k}})/n]_{l}$ in the remainder of the proof. The difference between $[-\nabla \boldsymbol{s}_{n}^{(k)}(\boldsymbol{\theta^{\prime,k}})/n]_{l}$ and $[-\nabla \boldsymbol{s}_{n}(\boldsymbol{\theta_{0}})/n]_{k,l}$ reads
\begin{equation}
\begin{split}
    &- \frac{1}{n}
    \sum_{x\in X_{n}} \frac{\partial}{
    \partial \theta^{(l)}}  
    f^{(k)}(x;\boldsymbol{\theta^{\prime,k}})
    +\frac{1}{n}
    \sum_{x\in X_{n}} \frac{\partial}{
    \partial \theta^{(l)}}  
    f^{(k)}(x;\boldsymbol{\theta_{0}})
     \\
    &+
    \frac{1}{n}
    \sum_{x\in D_{n}}\frac{\lambda(x; \boldsymbol{\theta^{\prime,k}})}
         {\rho(x)} 
      \frac{\partial}{\partial \theta^{(l)}} 
      f^{(k)} (x;\boldsymbol{\theta^{\prime,k}})
      -\frac{1}{n}
    \sum_{x\in D_{n}} \frac{\lambda(x; \boldsymbol{\theta_{0}})}
         {\rho(x)} 
      \frac{\partial}{\partial \theta^{(l)}} 
      f^{(k)} (x;\boldsymbol{\theta_{0}})\\
    &+\frac{1}{n}
    \sum_{x\in D_{n}} 
    \frac{\lambda(x; \boldsymbol{\theta^{\prime,k}})}
         {\rho(x)} 
    f^{(k)}(x;\boldsymbol{\theta^{\prime,k}})z^{(l)}(x) - 
    \frac{1}{n}
    \sum_{x\in D_{n}} \frac{\lambda(x; \boldsymbol{\theta_{0}})}
         {\rho(x)} 
    f^{(k)}(x;\boldsymbol{\theta_{0}})z^{(l)}(x).
\end{split}
\label{e:differences}
\end{equation}
Below, we specifically analyze the asymptotic behavior of the second line in (\ref{e:differences}). The analysis of the other two lines proceeds along similar lines. 

Under condition ($\mathcal{C}.11$), the first-order Taylor expansion of $ \frac{\partial}{\partial \theta^{(l)}} 
f^{(k)} (x;\boldsymbol{\theta^{\prime,k}})$ with respect to $\boldsymbol\theta$ at $\boldsymbol{\theta_{0}}$ reads
\begin{equation*}
    \frac{\partial}{\partial \theta^{(l)}} 
    f^{(k)} (x;\boldsymbol{\theta^{\prime,k}})
    -\frac{\partial}{\partial \theta^{(l)}} 
    f^{(k)} (x;\boldsymbol{\theta_{0}})=
    \sum_{i=1}^{m}(\boldsymbol{\theta^{\prime,k}}-\boldsymbol{\theta_0})^{(i)}\frac{\partial^{2}}{
    \partial \theta^{(i)} \partial \theta^{(l)}} f^{(k)}(x;\boldsymbol{\theta^{\prime\prime,k}}),
\end{equation*}
where $\boldsymbol{\theta^{\prime\prime,k}}$ is a convex combination of $\boldsymbol{\theta^{\prime,k}}$ and $\boldsymbol{\theta_0}$, and thus $ \boldsymbol{\theta^{\prime\prime,k}}\in C_{\beta}^{(n)}$. Then, recalling that $\lambda(x; \boldsymbol{\theta^{\prime,k}})=\lambda(x; \boldsymbol{\theta_{0}})+b(x)o(1)$, the second line in (\ref{e:differences}) is bounded in absolute value by
\begin{equation*}
    \frac{1}{n}
    \sum_{x\in D_{n}} 
    \left\{
    \sum_{i=1}^{m}
    \frac{\lambda(x;\boldsymbol{\theta_{0}})}{\rho(x)}
    \left|(\boldsymbol{\theta^{\prime,k}}-\boldsymbol{\theta_0})^{(i)}\right|
    \left|\frac{\partial^{2}}{
    \partial \theta^{(i)} \partial \theta^{(l)}} f^{(k)}(x;\boldsymbol{\theta^{\prime\prime,k}})
    \right|+
    \left|\frac{b(x)o(1)}{\rho(x)}\right|
    \left|\frac{\partial}{
    \partial \theta^{(l)}} f^{(k)}(x;\boldsymbol{\theta^{\prime,k}})\right|
    \right\},
\end{equation*}
which, recalling the definition of $C_{\beta}^{(n)}$ and the bound functions in (\ref{e:bounds}), is further bounded by
\begin{equation}
    \sum_{i=1}^{m}
    \sum_{x\in D_{n}} 
    \frac{\beta}{n^{3/2}}
    \frac{\lambda(x;\boldsymbol{\theta_{0}})}{\rho(x)}
    d_{i,l}^{(k)}(x)
    +
    \sum_{x\in D_{n}} 
    \frac{1}{n}
    \frac{b(x)}{\rho(x)}|o(1)|
    d_{l}^{(k)}(x)
    \label{e:bound}
\end{equation}
for large enough $n$. Note that this bound depends only on $n$ and not on $\boldsymbol{\theta^{\prime,k}}$. Furthermore, by condition ($\mathcal{C}.11$), one can apply the Campbell-Mecke theorem and Markov's inequality as in the proof of Theorem \ref{lemma:4_5} to obtain that the inner sum in the first term of (\ref{e:bound}) converges under $P_{\boldsymbol{\theta_{0}}}$ in probability to zero. Since this result applies to every component of the inner sum with $1 \leq i \leq m$ and to the second term, (\ref{e:bound}) converges under $P_{\boldsymbol{\theta_{0}}}$ in probability to zero. Hence,
\begin{equation*}
    \sup_{\boldsymbol{\theta^{\prime,k}}\in C_{\beta}^{(n)}}
    \left|\frac{1}{n}
    \sum_{x\in D_{n}}
    \frac{\lambda(x; \boldsymbol{\theta^{\prime,k}})}
         {\rho(x)} 
      \frac{\partial}{\partial \theta^{(l)}} 
      f^{(k)} (x;\boldsymbol{\theta^{\prime,k}})
      - \frac{1}{n}
    \sum_{x\in D_{n}}\frac{\lambda(x; \boldsymbol{\theta_{0}})}
         {\rho(x)} 
      \frac{\partial}{\partial \theta^{(l)}} 
      f^{(k)} (x;\boldsymbol{\theta_{0}})
     \right|
    =o_{P}(1).
\end{equation*}

Applying similar proofs to the other two lines in (\ref{e:differences}) and combining the obtained results as above, by the triangle inequality, one obtains that
\begin{equation}
    \sup_{\boldsymbol{\theta^{\prime,k}}\in C_{\beta}^{(n)}}
    \left|\left[-\frac{1}{n}\nabla \boldsymbol{s}_{n}^{(k)}(\boldsymbol{\theta^{\prime,k}})\right]_{l}-\left[-\frac{1}{n}\nabla \boldsymbol{s}_{n}(\boldsymbol{\theta_{0}})\right]_{k,l}\right|
    =o_{P}(1).
    \label{e:lemma_sup_1}
\end{equation}

Finally, consider the analogue of Lemma
\ref{lemma:4_3} for the unbiased 
estimating equation (\ref{e:general_score}). Under conditions ($\mathcal{C}.1$), ($\mathcal{C}.9$) and ($\mathcal{C}.11$),
$-\nabla \boldsymbol{s}_{n}(\boldsymbol{\theta_0})/n$ now
reads 
\begin{equation*}
\begin{split}
    &\frac{1}{n}\sum_{i=1}^{n}
    \left[
    \sum_{x\in Y_{i}} - \frac{\partial}{
    \partial \theta^{(l)}}  f^{(k)}(x;\boldsymbol{\theta_0})
    +\sum_{x\in E_{i}}
    \frac{\lambda(x; \boldsymbol{\theta_0})}
         {\rho(x)} 
    \left(
      \frac{\partial}{\partial \theta^{(l)}} f^{(k)} (x;\boldsymbol{\theta_0})
    + f^{(k)}(x;\boldsymbol{\theta_0})  z^{(l)}(x)
    \right)
 \right]_{k,l=1}^{m}.
\end{split}
\end{equation*}
Again, by conditions ($\mathcal{C}.5$)
and ($\mathcal{C}.11$),
one can apply the Campbell--Mecke theorem and
Kolmogorov’s strong law of large numbers 
to obtain that 
$-\nabla \boldsymbol{s}_{n}(\boldsymbol{\theta_0})/n$ converges almost
surely, and
thus in probability, under $P_{\boldsymbol{\theta_0}}$ to $\widetilde{\bm{U}}$. Accordingly,
\begin{equation}
    \left|\left[-\frac{1}{n}\nabla \boldsymbol{s}_{n}(\boldsymbol{\theta_{0}})\right]_{k,l}-\widetilde{\bm{U}}_{k,l}\right|=o_{P}(1).
    \label{e:lemma_2}
\end{equation}
The proof is completed by applying the triangle inequality to combine (\ref{e:lemma_sup_1}) and (\ref{e:lemma_2}).
\end{proof}

The next theorem is then concerned with the central limit theorem.

\begin{theorem}
Assume that the conditions ($\mathcal{C}.1$), ($\mathcal{C}.4$)--($\mathcal{C}.5$) and ($\mathcal{C}.9$)--($\mathcal{C}.14$) hold. 
Define $\boldsymbol{s}_{n}(\boldsymbol\theta)$ by
(\ref{e:general_score}) with $\boldsymbol\theta\in \boldsymbol\Theta$.
Then, as $n\rightarrow\infty$,
for every $n$, an estimator $\widehat{\boldsymbol{\theta}_{n}}$ exists that solves
$\boldsymbol{s}_{n}(\boldsymbol\theta)=0$ with a probability tending to one. Moreover,
$\widehat{\boldsymbol{\theta}_n}$ converges 
under $P_{\boldsymbol{\theta_0}}$ in probability
to $\boldsymbol{\theta_0}$ and
$n^{1/2}(\widehat{\boldsymbol{\theta}_n}
-\boldsymbol{\theta_0})$ converges under 
$P_{\boldsymbol{\theta_0}}$ in
distribution to an $m$-dimensional normally
distributed random vector with mean 
$\boldsymbol 0$ and covariance matrix
$\widetilde{\bm{U}}^{-1}\widetilde{\bm{V}}(\widetilde{\bm{U}}^{-1})^{\top}$, where
$\widetilde{\bm{U}}$ is as defined in Lemma \ref{lemma:4_6} and $\widetilde{\bm{V}}$ is given by
\begin{equation*}
    \begin{split}
        \widetilde{\bm{V}}
        &=\left[\int_{W\times T} f^{(k)}(u;\boldsymbol{\theta_0})
        f^{(l)}(u; \boldsymbol{\theta_0})
        \lambda(u;\boldsymbol{\theta_0})
        \left( 1 + 
        \frac{\lambda(u;\boldsymbol{\theta_0})}
        {\rho(u)} \right) du
        \right]_{k,l=1}^{m}\\ 
        &+\left[\int_{W\times T}\int_{W\times T} f^{(k)}(u;\boldsymbol{\theta_0})
        \lambda(u;\boldsymbol{\theta_0})
        f^{(l)}(v;\boldsymbol{\theta_0})
        \lambda(v;\boldsymbol{\theta_0}) 
        (g(u,v)+h(u,v)-2 )dudv\right]_{k,l=1}^{m}.
    \end{split}
    \end{equation*}
\end{theorem}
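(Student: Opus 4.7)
The plan is to combine three ingredients: a central limit theorem for $\boldsymbol{s}_n(\boldsymbol{\theta_0})/n^{1/2}$, the uniform local approximation of its Jacobian supplied by Lemma~\ref{lemma:4_6}, and \citet[Corollary~2.6]{Sorensen1999solutionExistence}, which bundles the existence of a root, its consistency and its asymptotic normality into a single statement for $M$-estimators.

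First, I would establish the CLT for $\boldsymbol{s}_n(\boldsymbol{\theta_0})/n^{1/2}$. Under condition ($\mathcal{C}.1$) this quantity equals $n^{-1/2}\sum_{i=1}^{n}\boldsymbol{\xi}_{i}$, where
\begin{equation*}
    \boldsymbol{\xi}_{i}=\sum_{x\in Y_{i}}\boldsymbol{f}(x;\boldsymbol{\theta_{0}})-\sum_{x\in E_{i}}\boldsymbol{f}(x;\boldsymbol{\theta_{0}})\frac{\lambda(x;\boldsymbol{\theta_{0}})}{\rho(x)}
\end{equation*}
are independent and identically distributed with mean $\boldsymbol{0}$, the latter being exactly the unbiasedness of~(\ref{e:general_score}) via the Campbell--Mecke theorem. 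Independence of $Y_{i}$ and $E_{i}$, combined with the second-moment formulas of Section~\ref{subsec:momentmeasures}, gives $\Cov\{\boldsymbol{\xi}_{i}\}=\widetilde{\bm V}$: the single-integral diagonal contribution collects $\int f^{(k)}f^{(l)}\lambda\,du$ coming from $Y_{i}$ and $\int f^{(k)}f^{(l)}\lambda^{2}/\rho\,du$ from $E_{i}$, while the pair-correlation corrections $h-1$ and $g-1$ combine into the factor $g+h-2$ in the double integral. Conditions ($\mathcal{C}.10$) and ($\mathcal{C}.14$) ensure finiteness of all entries, and the multivariate Lindeberg--L\'evy theorem then yields that $\boldsymbol{s}_n(\boldsymbol{\theta_0})/n^{1/2}$ converges under $P_{\boldsymbol{\theta_0}}$ in distribution to a centred $m$-dimensional normal with covariance $\widetilde{\bm V}$.

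Second, I would invoke \citet[Corollary~2.6]{Sorensen1999solutionExistence} to obtain existence, consistency and asymptotic normality simultaneously. The required hypotheses are continuous differentiability of $\boldsymbol{s}_n$, granted by ($\mathcal{C}.11$); invertibility of the limiting Jacobian $\widetilde{\bm U}$, which is ($\mathcal{C}.13$); and the local uniform approximation $\sup_{\boldsymbol\theta\in C_\beta^{(n)}}\|-\nabla\boldsymbol{s}_n(\boldsymbol\theta)/n-\widetilde{\bm U}\|\to 0$ in $P_{\boldsymbol{\theta_0}}$-probability for every $\beta>0$, which is precisely the conclusion of Lemma~\ref{lemma:4_6}. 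S{\o}rensen's corollary then produces, with $P_{\boldsymbol{\theta_0}}$-probability tending to one, a zero $\widehat{\boldsymbol{\theta}_n}$ of $\boldsymbol{s}_n$ for which $\widehat{\boldsymbol{\theta}_n}\to\boldsymbol{\theta_0}$ in $P_{\boldsymbol{\theta_0}}$-probability and $n^{1/2}(\widehat{\boldsymbol{\theta}_n}-\boldsymbol{\theta_0})=\widetilde{\bm U}^{-1}\boldsymbol{s}_n(\boldsymbol{\theta_0})/n^{1/2}+o_{P}(1)$. A final application of Slutsky's theorem converts the CLT established in the first step into the claimed limit law with covariance matrix $\widetilde{\bm U}^{-1}\widetilde{\bm V}(\widetilde{\bm U}^{-1})^{\top}$.

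The main obstacle is that, unlike in Theorem~\ref{lemma:4_5}, one cannot first obtain strong consistency and only then absorb the Taylor remainder of the score: the log-likelihood interpretation, and hence the concavity argument underlying Theorem~\ref{theorem:consistency}, is lost for a general test function $\boldsymbol{f}$. The uniform control of $\nabla\boldsymbol{s}_n/n$ over the shrinking cube $C_\beta^{(n)}$ must therefore be secured directly, which is exactly the role of Lemma~\ref{lemma:4_6}. Its proof leans on the H\H{o}lder-type hypothesis ($\mathcal{C}.12$) and the integrable dominating envelopes of ($\mathcal{C}.11$) to bound the second-order remainder of the score, together with Markov's inequality and the Campbell--Mecke theorem to convert those bounds into $o_P(1)$ statements uniformly in $\boldsymbol\theta\in C_\beta^{(n)}$.
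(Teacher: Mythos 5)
Your first step, the CLT for $\boldsymbol{s}_n(\boldsymbol{\theta_0})/n^{1/2}$, matches the paper: the i.i.d.\ decomposition over $(Y_i,E_i)$, unbiasedness via the Campbell--Mecke theorem, the covariance computation giving $\widetilde{\bm V}$ with the $g+h-2$ correction, finiteness from ($\mathcal{C}.4$), ($\mathcal{C}.10$), ($\mathcal{C}.14$), and Lindeberg--L\'evy. The gap is in the second step. You invoke \citet[Corollary~2.6]{Sorensen1999solutionExistence} as if it delivered existence, consistency \emph{and} the expansion $n^{1/2}(\widehat{\boldsymbol{\theta}}_n-\boldsymbol{\theta_0})=\widetilde{\bm U}^{-1}\boldsymbol{s}_n(\boldsymbol{\theta_0})/n^{1/2}+o_P(1)$ from continuous differentiability, invertibility of $\widetilde{\bm U}$ and the Jacobian control of Lemma~\ref{lemma:4_6} alone. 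In the paper's own usage (Theorem~\ref{theorem:asymptotics}), Corollary~2.6 together with Condition~2.5 yields only the existence of a root; the asymptotic normality statement rests on Condition~2.7, Corollary~2.8 and Theorem~2.9 of S{\o}rensen, which require verifying, in addition to $\sup_{\boldsymbol\theta\in C_\beta^{(n)}}\|\boldsymbol{s}_n(\boldsymbol\theta)/n\|$ and the Jacobian supremum, that
\begin{equation*}
\sup_{\boldsymbol{\theta^{\prime,k}}\in C_{\beta}^{(n)}}\left\|-\frac{1}{n}\nabla^{2}\boldsymbol{s}^{(i)}_{n}(\boldsymbol{\theta^{\prime,1}},\dots,\boldsymbol{\theta^{\prime,m}})-\widetilde{\bm{Q}}^{(i)}\right\|\stackrel{P_{\boldsymbol{\theta_0}}}{\to}0
\end{equation*}
for the second-derivative limits $\widetilde{\bm Q}^{(i)}$. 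You omit this verification entirely, and it is precisely where condition ($\mathcal{C}.12$) enters: since $f^{(i)}$ is only twice continuously differentiable, the difference of second-order partial derivatives at $\boldsymbol{\theta^{\prime,k}}$ and $\boldsymbol{\theta_0}$ cannot be handled by a further Taylor expansion and must instead be bounded through the adapted H\H{o}lder condition, after which Campbell--Mecke and Markov's inequality give the uniform $o_P(1)$ bound.

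A related misattribution: you state that the proof of Lemma~\ref{lemma:4_6} leans on ($\mathcal{C}.12$); it does not. That lemma assumes only ($\mathcal{C}.1$), ($\mathcal{C}.5$), ($\mathcal{C}.9$) and ($\mathcal{C}.11$), and controls the first-derivative supremum by Taylor-expanding $\partial f^{(k)}/\partial\theta^{(l)}$ with the envelopes $d^{(i)}_{k,l}$ of ($\mathcal{C}.11$). As written, your argument leaves ($\mathcal{C}.12$) without any role and relies on a cited corollary whose hypotheses and conclusion do not cover what you ask of it; to repair it you would either verify the second-derivative condition as the paper does, or replace the citation by a self-contained argument (e.g.\ a Brouwer-type existence argument on $C_\beta^{(n)}$ using only the Jacobian control plus the score CLT), which is a genuinely different and not obviously shorter route.
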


\begin{proof}
First of all, by condition ($\mathcal{C}.11$), $\boldsymbol{s}_{n}(\boldsymbol{\theta})$ is twice continuously differentiable with respect to $\boldsymbol\theta$. Then, we shall verify the analogues of Lemmas
\ref{lemma:4_3} and \ref{lemma:4_4} for the unbiased 
estimating equation (\ref{e:general_score}). 

The analogue of Lemma \ref{lemma:4_3} was established in Lemma~\ref{lemma:4_6}. For Lemma \ref{lemma:4_4},
under conditions ($\mathcal{C}.1$) and ($\mathcal{C}.9$),
$\boldsymbol{s}_{n}(\boldsymbol\theta)/n^{1/2}$ now reads
\begin{equation*}
    \frac{\boldsymbol{s}_{n}(\boldsymbol{\theta_0})}{n^{1/2}}
     =  n^{1/2}\left\{\frac{1}{n}\sum_{i=1}^{n}
    \sum_{x\in Y_{i}}
    \boldsymbol{f}(x;\boldsymbol{\theta_0})
    -\frac{1}{n}\sum_{i=1}^{n}\sum_{x\in E_{i}}
    \boldsymbol{f}(x;\boldsymbol{\theta_0})
     \frac{\lambda(x;\boldsymbol{\theta_0})}{\rho(x)}
    \right\}.
\end{equation*}
The two terms between the curly brackets
above are both averages of independent and identically
distributed real-valued random vectors. By conditions ($\mathcal{C}.9$)--($\mathcal{C}.11$), their means
are finite and identical. Additionally by conditions ($\mathcal{C}.4$) and ($\mathcal{C}.14$), their variances
are also finite and given by the entries of $\widetilde{\bm{V}}$.
Then, similar to the proof of Lemma \ref{lemma:4_4}, one can apply the Campbell--Mecke theorem and multi-variate Lindeberg–L\'evy’s central limit theorem to
obtain that $\boldsymbol{s}_{n}(\boldsymbol{\theta_{0}})/n^{1/2}$
converges under  $P_{\boldsymbol{\theta_0}}$
in distribution to an $m$-dimensional normally distributed random vector with mean zero and covariance matrix $\widetilde{\bm{V}}$.

From \citet[Condition~2.7, Corollary~2.8 and Theorem~2.9]{Sorensen1999solutionExistence} and by condition ($\mathcal{C}.13$), we thus only need to prove that 
\begin{equation*}
    \sup_{\boldsymbol{\theta}\in C_{\beta}^{(n)}}\left\|
    -\frac{1}{n}\boldsymbol{s}_{n}(\boldsymbol{\theta})\right\|, \quad 
    \sup_{\boldsymbol{\theta^{\prime,k}}\in C_{\beta}^{(n)}}\left\|-\frac{1}{n}\nabla \boldsymbol{s}_{n}(\boldsymbol{\theta^{\prime,1}},\dots,\boldsymbol{\theta^{\prime,m}})-\widetilde{\bm{U}}\right\|
\end{equation*}
and
\begin{equation*}
    \sup_{\boldsymbol{\theta^{\prime,k}}\in C_{\beta}^{(n)}}\left\|-\frac{1}{n}\nabla^{2} \boldsymbol{s}^{(i)}_{n}(\boldsymbol{\theta^{\prime,1}},\dots,\boldsymbol{\theta^{\prime,m}})-\widetilde{\bm{Q}}^{(i)}\right\|
\end{equation*}
all converge under $P_{\boldsymbol{\theta_0}}$ in probability to zero, where $\widetilde{\bm{Q}}^{(i)}$ is given by
\begin{equation*}
\begin{split}
    \widetilde{\bm{Q}}^{(i)}
    &=\left[\int_{W\times T}f^{(i)}(u,\boldsymbol{\theta_0})\lambda(u,\boldsymbol{\theta_0})z^{(k)}(u)z^{(l)}(u)du\right]_{k,l=1}^{m} \\
    & +\left[\int_{W\times T}\left(\frac{\partial}{\partial\theta^{(k)}}f^{(i)}(u,\boldsymbol{\theta_0})z^{(l)}(u)+\frac{\partial}{\partial\theta^{(l)}}f^{(i)}(u,\boldsymbol{\theta_0})z^{(k)}(u)\right)\lambda(u,\boldsymbol{\theta_0})du\right]_{k,l=1}^{m}.
\end{split}
\end{equation*}
In Lemma~\ref{lemma:4_6}, we already proved that
\begin{equation*}
    \sup_{\boldsymbol{\theta^{\prime,k}}\in C_{\beta}^{(n)}}\left\|-\frac{1}{n}\nabla \boldsymbol{s}_{n}(\boldsymbol{\theta^{\prime,1}},\dots,\boldsymbol{\theta^{\prime,m}})-\widetilde{\bm{U}}\right\| \stackrel{P_{\boldsymbol{\theta_0}}}{\to} 0.
\end{equation*}
The remainder of the proof mostly proceeds along similar lines for the other two cases. 

Note that condition ($\mathcal{C}.12$) is needed in proving the convergence of some terms in the $(k,l)$-th entry of $-\nabla^{2} \boldsymbol{s}^{(i)}_{n}(\boldsymbol{\theta^{\prime,1}},\dots,\boldsymbol{\theta^{\prime,m}})/n$. For instance, we need to prove that
\begin{equation*}
    \sup_{\boldsymbol{\theta^{\prime,k}}\in C_{\beta}^{(n)}}\left|\frac{1}{n}
    \sum_{x\in D_{n}}
    \frac{\lambda(x; \boldsymbol{\theta^{\prime,k}})}
         {\rho(x)} 
      \frac{\partial^{2}}{\partial \theta^{(j)}\theta^{(l)}} 
      f^{(k)} (x;\boldsymbol{\theta^{\prime,k}})
      - \frac{1}{n}
    \sum_{x\in D_{n}}\frac{\lambda(x; \boldsymbol{\theta_{0}})}
         {\rho(x)} 
      \frac{\partial^{2}}{\partial \theta^{(j)}\theta^{(l)}} 
      f^{(k)} (x;\boldsymbol{\theta_{0}})\right|=o_{P}(1).
\end{equation*}
Since $f^{(k)}(u,\boldsymbol\theta)$ is only twice continuously
differentiable with respect to
$\boldsymbol\theta$, the proof in Lemma \ref{lemma:4_6} does not apply here. However, recalling from the proof of Lemma \ref{lemma:4_6} that $\lambda(x; \boldsymbol{\theta^{\prime,k}})=\lambda(x; \boldsymbol{\theta_{0}})+b(x)o(1)$, the absolute value term above is bounded by
\begin{equation*}
    \frac{1}{n}
    \sum_{x\in D_{n}} 
    \left\{
    \frac{\lambda(x;\boldsymbol{\theta_{0}})}{\rho(x)}
    \left|\frac{\partial^{2} f^{(k)} (x;\boldsymbol{\theta^{\prime,k}})}{\partial \theta^{(j)}\theta^{(l)}} 
      -
      \frac{\partial^{2} f^{(k)} (x;\boldsymbol{\theta_{0}})}{\partial \theta^{(j)}\theta^{(l)}}
    \right|+
    \left|\frac{b(x)o(1)}{\rho(x)}\right|
    \left|\frac{\partial^{2} f^{(k)} (x;\boldsymbol{\theta^{\prime,k}})}{\partial \theta^{(j)}\theta^{(l)}} 
     \right|\right\},
\end{equation*}
which, under the adapted H\H{o}lder condition in condition ($\mathcal{C}.12$), is further bounded by 
\begin{equation*}
    \sum_{x\in D_{n}} 
    \left\{
    \frac{\beta}{n^{\alpha/2+1}}
    \frac{\lambda(x;\boldsymbol{\theta_{0}})}{\rho(x)}
    e_{i,l}^{(k)}(x)+
    \frac{1}{n}
    \frac{b(x)}{\rho(x)}\left|o(1)\right|
    d_{i,l}^{(k)}(x)
    \right\}
\end{equation*}
for large enough $n$. This bound depends only on $n$ and not on $\boldsymbol\theta^{\prime,k}$. By conditions ($\mathcal{C}.11$) and ($\mathcal{C}.12$), one can again apply the Campbell-Mecke theorem and Markov's inequality as in the proof of Theorem \ref{lemma:4_5} to obtain that the bound converges under $P_{\boldsymbol{\theta_{0}}}$ in probability to zero.
\end{proof}

\begin{remark}
Looking back on the logistic regression estimator given by (\ref{e:LR_base}), its component functions are smooth with respect to $\boldsymbol\theta$ and thus the second-order derivatives satisfy the adapted H\H{o}lder condition. Moreover, its component functions and the first- and second-order derivatives are bounded under condition ($\mathcal{C}.5$). 
\end{remark}

\section{Conclusion}
\label{sec:conclusion}
In this paper, we established strong consistency and asymptotic normality for logistic regression estimators for spatio-temporal point processes under the regime of infill asymptotics. We also extended the asymptotic results to other unbiased estimating functions that are based on the Campbell--Mecke theorem. We demonstrated how to construct approximate confidence intervals in practice. Compared to increasing-domain asymptotics, infill asymptotics provide the theoretical foundation on the asymptotic behaviour of parameter estimates where identical point patterns are only valid within a fixed region.

In addition, we assumed that spatio-temporal point processes of interest have log-linear parametric intensity functions, which simplifies the computation. However, we believe that the asymptotic results can be proved for a wide class of intensity functions. Moreover, motivated by \citet{Baddeley2014LRPP}, it would also be interesting to extend infill asymptotics to the Gibbs domain by studying associated background and conditions.


\section*{Acknowledgements}
This research was funded by the Dutch Research Council (NWO) for the project 
`Data driven risk management for fire services' (18004). We thank Maurits
de Graaf and Cl\'{e}ment Lezane for valuable discussions and comments. 

\bibliographystyle{plainnat}
\bibliography{bibliography.bib}

\end{document}